\documentclass{amsart}
\usepackage{graphicx} 
\usepackage{color,amsmath,amssymb,amsthm,mathtools,graphicx}
\usepackage{libertine}
\usepackage[libertine]{newtxmath}
\usepackage{bbold}
\usepackage{epsfig,fancyhdr}
\usepackage{dsfont} 
\usepackage{graphicx,float}
\usepackage{graphicx}
\usepackage{epsfig,color,fancyhdr,setspace}
\usepackage{epstopdf}
\usepackage{nccmath}
\usepackage{wrapfig}
\usepackage{soul}
\usepackage{import}
\usepackage{lineno}
\usepackage{stackrel}
\usepackage[dvipsnames]{xcolor}
\usepackage[allcolors = blue,colorlinks]{hyperref}
\usepackage[abs]{overpic}
\usepackage[center]{caption}
\usepackage{subcaption}
\usepackage{tikz-cd}
\usepackage{enumitem}
\counterwithin{figure}{section}
\usepackage{tikz-cd}
\usepackage{wasysym}
\usepackage{marvosym}
\usepackage{extarrows}
\usepackage{amsmath}
\usepackage{graphicx,amsmath,mathtools,float}
\usepackage{setspace}
\graphicspath{{./images/}}
\usepackage{stackrel}
\usepackage{yfonts}
\usepackage[bottom]{footmisc}

\usepackage[T1]{fontenc}
\usepackage{babel}
\usepackage{enumitem, hyperref}\makeatletter
\def\namedlabel#1#2{\begingroup
    #2%
    \def\@currentlabel{#2}%
    \phantomsection\label{#1}\endgroup
}
\makeatother
\usepackage{type1cm} 
\usepackage{blindtext}
\usepackage{scalerel,stackengine}
\usepackage{color}
\usepackage{ mathrsfs }
\usepackage{tocbasic}
\newtheorem{theorem}{Theorem}[section]
\newtheorem{proposition}[theorem]{Proposition}
\newtheorem{lemma}[theorem]{Lemma}

\newtheorem{corollary}[theorem]{Corollary}
\newtheorem{conjecture}[theorem]{Conjecture}
\theoremstyle{definition}
\newtheorem{definition}[theorem]{Definition}
\newtheorem{example}[theorem]{Example}
\newtheorem{remark}[theorem]{Remark}

\newtheorem{notation}[theorem]{Notation}
\newcommand{\im}{\operatorname{im}}

\newcommand{\Z}{\mathbb{Z}}
\newcommand{\Q}{\mathbb{Q}}
\newcommand{\m}{_{(m)}}

\newcommand{\id}{\text{id}}
\newcommand{\inv}{^{-1}}

\newcommand{\allzero}{\color{gray!60} (0,0,0)}
\DeclareMathOperator{\rank}{rank}
\DeclareMathOperator{\frank}{free\,rank}
\DeclareMathOperator{\aut}{Aut}
\setcounter{MaxMatrixCols}{20}

\pagestyle{fancy}
\fancyhead{}
\fancyhead[CE]{{\bf {\footnotesize   CHRISTIANA,  CLINGENPEEL, GUO, OH, PRZYTYCKI, WANG, AND YUN}}}
\fancyhead[CO]{{\bf {\footnotesize LOW-DIMENSIONAL HOMOLOGY OF YB OPERATORS YIELDING HOMFLYPT}}}


\title{Low Dimensional Homology of the Yang-Baxter Operators Yielding the HOMFLYPT Polynomial}

\author{Anthony Christiana}
\address{Department of Mathematics, The George Washington University, Washington DC, USA}
\email{{\rm ajchristiana@gwmail.gwu.edu}}

\author{Ben Clingenpeel}
\address{Department of Mathematics, The George Washington University, Washington DC, USA}
\email{{\rm ben.clingenpeel@email.gwu.edu}}

\author{Huizheng Guo}
\address{Department of Mathematics, The George Washington University, Washington DC, USA}
\email{{\rm hguo30@gwu.edu}}

\author{Jinseok Oh}
\address{ Department of Mathematics, Kyungpook National University, Daegu, South Korea}
\email{{\rm jinseokoh@knu.ac.kr}}

\author{J\'{o}zef H. Przytycki}
\address{Department of Mathematics, The George Washington University, Washington DC, USA and \newline \indent Department of Mathematics, University of Gda\'{n}sk, Gda\'{n}sk, Poland}
\email{{\rm przytyck@gwu.edu}}

\author{Xiao Wang}

\address{Department of Mathematics, Jilin University, Changchun, China}
\email{{\rm wangxiaotop@jlu.edu.cn}}

\author{Hongdae Yun}
\address{ Department of Mathematics, Kyungpook National University, Daegu, South Korea}
\email{{\rm yyyj1234@knu.ac.kr}}

\begin{document}
\subjclass[2020]{Primary: 57K10.   Secondary: 16T25.}

\keywords{HOMFLYPT polynomial, Homology, Jones polynomial, knot theory, Yang-Baxter operator, Yang-Baxter homology,}

\title{Low Dimensional Homology of the Yang-Baxter Operators Yielding the HOMFLYPT Polynomial}

\date{February 27, 2025}

\maketitle

\begin{abstract}
    In this paper, we analyze the homology of the Yang-Baxter Operators $R\m$ yielding the HOMFLYPT polynomial, reducing the computation of the $n$-th homology of $R_{(m)}$ for arbitrary $m$ to the computation of $n+1$ initial conditions. We then produce the explicit formulas for the third and fourth homology. 
\end{abstract}

\tableofcontents

\section{Introduction}

L. Faddeev coins the term ``Yang-Baxter equation" after the appearance of the equation in the independent work of Yang \cite{Yan} in 1967 and Baxter \cite{Bax} in 1972. The former was studying the many-body problem and the latter the partition function of the eight-vertex lattice model. It should be noted, however, that the idea was already implicit in the work of Hans Bethe who was studying  commuting transfer matrices in statistical mechanics, and factorizable $S$ matrices in field theory \cite{Jimb}.  

In his seminal paper \cite{Jon2} Vaughan Jones observed the potential of Yang-Baxter operators to contribute to the theory of knot/link invariants (see also Jones' letter to Joan Birman \cite{Jon1} and \cite{Jon3}).  In particular, the $2$ and $3$- (co)cycles from the homology theory of Yang-Baxter operators may lead to a natural construction of invariants for links and $2$-links---or more generally, $n$-(co)cycles---to analyze codimension 2 embeddings; compare \cite{CKS,PrRo}. More work remains in this direction, but since its introduction, the Yang-Baxter equation has served as a bridge between mathematics and physics, especially in low dimensional topology.

A homology theory for Yang-Baxter operators was introduced independently by Lebed \cite{Leb-1} and the fifth author \cite{Prz-6}. This homology theory generalizes the homology theory of quandles \cite{CJKLS} and the homology theory of set-theoretic Yang-Baxter operators \cite{CES}. In this article, we will focus on a family of Yang-Baxter operators leading to the HOMFLYPT polynomial \cite{FYHLMO, PT}\footnote{The Yang-Baxter operator on level $m$ leads to a concrete substitution of HOMFLYPT polynomial, and they are related to $sl_m$ representations.}; see \cite{Jon3, Tur} for the construction.

The paper is organized as follows: In this section, we recall the definitions of precubic modules, Yang-Baxter operators, and Yang-Baxter homology. Furthermore we provide the Yang-Baxter operators $R\m$ giving the HOMFLYPT polynomial and we stress the fact that the Boltzmann weights of these operators depend only on the ordering of the basis. Expanding on that, in Section 2, we study the properties of chain complexes and order-preserving maps defined on the bases of these complexes. 
In Section 3, we present filtrations of chain complexes which allow us to split our chain complexes as the direct sum of simpler ones.  This allows us to reduce the computation of the $n$-th homology of $R\m$ for arbitrary $m$ to the computation of $n+1$ initial conditions.  In Section 4, we prove the following  conjecture from \cite{PrWa2} about the third homology of the Yang-Baxter Operator leading to the HOMFLYPT polynomial.

\begin{conjecture} \label{mainconjecture} 
    $ H_3(R_{(m)}) = k^{\frac{m(8-3m+m^2)}{6}}\oplus (k/(1-y^2))^{\frac{(m^2-1)(5m-6)}{6}}\oplus (k/(1-y^4))^{m(m-1)} $
\end{conjecture}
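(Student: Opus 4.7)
The plan is to invoke the reduction theorem established in Section~3, which reduces the computation of $H_n(R_{(m)})$ for arbitrary $m$ to that of $n+1$ initial conditions. For the third homology this amounts to verifying the formula for four values of $m$. Since each of the three multiplicity expressions in the conjecture---$\frac{m(8-3m+m^2)}{6}$ for the free part, $\frac{(m^2-1)(5m-6)}{6}$ for the $(1-y^2)$-torsion, and $m(m-1)$ for the $(1-y^4)$-torsion---is a polynomial in $m$ of degree at most three, four data points determine each uniquely. Thus the conjecture follows as soon as one checks the cases $m = 1, 2, 3, 4$ and appeals to the reduction theorem.

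The computation of each base case proceeds by using the filtration from Section~3 to decompose $C_3(R_{(m)})$ as a direct sum of small sub-complexes. These sub-complexes are naturally indexed by the ``order type'' of the basis elements appearing in each chain, which is justified by the earlier observation that the Boltzmann weights of $R_{(m)}$ depend only on the ordering of the basis. For each such sub-complex, the restricted boundary maps are written down as explicit matrices over $k$, reduced to Smith normal form, and the free rank together with the torsion divisors are read off. Summing the contributions across all sub-complexes then yields $H_3(R_{(m)})$ in each base case, and these values match the three polynomial expressions of the conjecture.

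The principal obstacle is explaining the $(1-y^4)$-torsion summand of rank $m(m-1)$. A naive reading of the defining Yang-Baxter relation yields only $(1-y^2)$-torsion, so the $(1-y^4)$ factor must arise from a specific family of sub-complexes in which two iterations of the Yang-Baxter relation interact multiplicatively. Isolating this family and computing its Smith normal form is the delicate step; the natural indexing of these sub-complexes by ordered pairs of distinct basis elements explains the count $m(m-1)$. Once this block is identified, the remaining sub-complexes split cleanly into a part contributing only to the free rank and a part contributing only to the $(1-y^2)$-torsion in the predicted multiplicities, and the four initial conditions are confirmed, completing the proof via the Section~3 reduction.
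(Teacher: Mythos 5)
Your proposal follows essentially the same route as the paper: invoke the Section~3 decomposition $C_\bullet^m \cong \bigoplus_{j=1}^m \binom{m-1}{j-1}C_\bullet^{jf}$ (which forces the multiplicities to be polynomials of degree $\leq 3$ in $m$ since $H_3(C_\bullet^{jf})=0$ for $j>4$), then pin them down by Smith-normal-form computations of the boundary maps for the four nonzero blocks. The paper computes $H_3(C_\bullet^{jf})$ for $j\leq 4$ directly and sums with binomials rather than interpolating through $m=1,\dots,4$, but this is the same work; your final paragraph about isolating a distinguished family of sub-complexes indexed by ordered pairs to ``explain'' the $(1-y^4)$ torsion is extra narrative not used in (and not supported by) the paper, whose $m(m-1)$ count simply falls out as $2\binom{m-1}{1}+2\binom{m-1}{2}$ from the Smith normal forms of $\partial_4$ on $C_\bullet^{2f}$ and $C_\bullet^{3f}$.
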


We extend our calculation to the fourth homology. In Section 5, we provide some speculations and future plans.

\subsection{Background on Yang-Baxter homology}
\subsubsection{Precubic modules and their homology}

Let $k$ be a commutative ring with identity (e.g. $k= \mathbb Z$ or $\mathbb Z[y^2]$). Recall that 
a chain complex $\{C_n,\partial_n \}_{n\in \mathbb Z}$, concisely $(C_\bullet, \partial_\bullet)$, is a sequence of  $k$-modules (abelian groups for $k= \mathbb Z$) $C_n$, and homomorphisms $\partial_n: C_n \to C_{n-1}$ such that $\partial_{n} \partial_{n+1} =0$ for any  $n$. One defines the $n$-th homology of a chain complex as
$H_n(C_\bullet) = \ker \partial_n / \im \partial_{n + 1}$.

In this paper, we recall the homology theory for Yang-Baxter Operators using the language of precubic modules:  


\begin{definition}\label{precubic}

 A precubic module $\mathscr{ C}$ is a collection of $k$-modules $C_{n}$, $n \geq 0$, together with maps called face maps or face operators,
$$d_{i}^{\epsilon}: C_{n} \to C_{n-1},~ i=1,...,n, ~ \varepsilon = 0,1$$
such that
$$d_i^{\varepsilon}d_j^{\delta}= d_{j-1}^{\delta}d_i^{\varepsilon} \mbox{ for } 1 \leq i<j \leq n.$$
\end{definition}

From such a precubic module, we can construct the the chain complex $(C_n,\partial_n)$ by setting $\partial_n=\sum_{i=1}^n(-1)^i(d_i^{(0)}-d_i^{(1)})$. The chain complex consequently leads to a homology (and cohomology) theory.

In this paper, we use the notation $d^{\ell}$ and $d^r$ (i.e. $\epsilon\in \{l,r\}$) in agreement with the graphical interpretation of our chosen face maps (see Figure \ref{face map}).
The homology of Yang-Baxter operators, as developed in \cite{Leb-2, Prz-8} is defined as an interesting example of this general construction. In the next subsection, we construct precubic modules yielded by Yang-Baxter operators.

\subsubsection{Yang-Baxter operators}
Let us recall the definition of the Yang-Baxter equation and Yang-Baxter operators.

\begin{definition}\label{YBE}
Let $k$ be a commutative ring and $V$ be a $k$-module.  If a $k$-linear map, $R:$ $V\otimes V \to V\otimes V$, satisfies the following equation\\
$$(R\otimes Id_{V})\circ (Id_{V}\otimes R)\circ (R\otimes Id_{V})=(Id_{V}\otimes R)\circ (R\otimes Id_{V})\circ (Id_{V}\otimes R),$$\ \\
then we say $R$ is a pre-Yang-Baxter operator. The equation above is called a Yang-Baxter equation.  If, in addition, $R$ is invertible, then we say $R$ is a Yang-Baxter operator. 
\end{definition}
 From now on, we assume $k$ is a commutative ring with identity and $V=kX$ is a free $k$-module with a basis set $X$. Now we provide the family of Yang-Baxter operators yielding the HOMFLYPT polynomial, which is the main object of study in this paper.

\begin{definition}
 Let $X_{(m)}=\{v_{1},\dots ,v_{m}\}$ with the natural ordering $v_{i} < v_{j}$ if $i < j$, and $V_{(m)}=kX_{(m)}$. We recall the Yang-Baxter operator yielding the HOMFLYPT polynomial on level $m$; this is an operator $R_{(m)}$ \cite{Jon3,Tur}:

    $R_{(m)}:V_{(m)}\otimes V_{(m)} \rightarrow V_{(m)}\otimes V_{(m)}$ with coefficients (Boltzmann weights)  given by \[ R^{ab}_{cd} = \begin{cases}
        q & \text{if} \quad d = a = b = c; \\
        1 & \text{if} \quad a = d \neq b = c; \\
        q-q^{-1} & \text{if} \quad c = a < b = d; \\
        0 & \text{otherwise.}
    \end{cases} \]
This means for example that \[ R_{(2)} = \begin{bmatrix}
    q & 0 & 0 & 0 \\
    0 & q-q^{-1} & 1 & 0 \\
    0 & 1 & 0 & 0 \\
    0 & 0 & 0 & q
\end{bmatrix}. \] 

\end{definition}

Following \cite{Prz-8}, we define the homology of the Yang Baxter operator which is column unital. To have a chain complex, we modify the family of Yang-Baxter operators defined above by dividing the entries by the sum of each column with a change of variable $y^{2}=1/(1+q-q^{-1})$. The resulting operator is in fact a Yang-Baxter operator over the ring $k=Z[y^2]$ after this change of variables; see \cite{PrWa2}. The normalization is given formally in the following definition, and in the remainder of the paper we use $R\m$ to refer to this normalized operator. The fact that $k$ is not a PID complicates our concrete calculations.

\begin{definition}\label{def:YB-operator}
    Let $X_{(m)}=\{v_{1},\dots ,v_{m}\}$ with the natural ordering $v_{i} <  v_{j}$ if $i < j$, and $V_{(m)}=kX_{(m)}$. The normalized Yang-Baxter operator is the operator $$R_{(m)}:V_{(m)}\otimes V\m \rightarrow V\m\otimes V\m$$ with coefficients (Boltzmann weights) 
 given by \[ R^{ab}_{cd} = \begin{cases}
        1 & \text{if} \quad d = a \geq b = c; \\
        y^2 & \text{if} \quad d = a < b = c; \\
        1 - y^2 & \text{if} \quad c = a < b = d; \\
        0 & \text{otherwise.}
    \end{cases} \]
\end{definition} This means, for example, that \[ R_{(2)} = \begin{bmatrix}
    1 & 0 & 0 & 0 \\
    0 & 1 - y^2 & 1 & 0 \\
    0 & y^2 & 0 & 0 \\
    0 & 0 & 0 & 1
\end{bmatrix}. \] 

\begin{remark}

\begin{enumerate}
    \item The above family of operators are column unital matrices, e.g. stochastic matrices\footnote{A (column) stochastic matrix is a square matrix whose columns are probability vectors. A probability vector is a numerical vector whose entries are real numbers between 0 and 1 whose sum is 1.}. When $|y|\leq 1$, this family of Yang-Baxter operators are stochastic.
    \item These two families of Yang-Baxter operators are all leading to the HOMFLYPT polynomial, see \cite{PrWa2, Wan}.
\end{enumerate}

\end{remark}
 The following is a construction of precubic modules from column unital Yang-Baxter operators.

\begin{definition}
Let $C_{n}= V^{\otimes n}$. We illustrate the face maps $d_{i}^{l},$ $d_{i}^{r}$ and their difference through Figure \ref{face map}\footnote{We should also acknowledge stimulating observations by Ivan Dynnikov who was in audience of Przytycki's Moscow talks and asked sharp questions, May 30, 2002 (compare \cite{Prz-6}).  It was observed few weeks before by Victoria Lebed working on her Ph.D. thesis \cite{Leb-1,Leb-2}}. More precisely, whenever we see a crossing, we apply the Yang-Baxter operator, and for straight lines, we apply the identity map. When hitting the left wall, we delete the first tensor element of each basis in the linear combination, and when hitting the right wall, we delete the last tensor element of each basis in the linear combination.\footnote{Note that we draw the diagrams of these face maps with left and right walls, consistent with the work of Lebed who studies face maps with a more general wall interaction. Imagine that our basic elements are particles, and that the Yang-Baxter operation describes their collisions. In this viewpoint, our wall axiom, deletion, corresponds to the absorption of the particle by the wall. Lebed formalizes these interactions. } 
\end{definition}

\begin{figure}[H]
    \centering
    \includegraphics[width=.8 \linewidth]{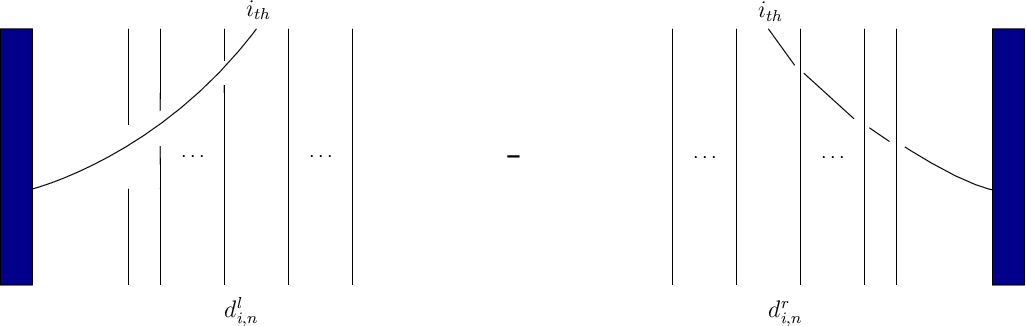}
\caption{Face map $d_{i,n}$.\label{face map}}
    \label{fig:enter-label}
\end{figure}

\begin{remark}

Note that $(C_{n},d_{i}^{\epsilon})$ with  $\epsilon \in \{l,r\}$ and  $1 \leq i \leq n$ is a precubic module for column unital Yang-Baxter  matrices \cite{Prz-8}. The chain complex from the precubic module $(C_{n},\partial_{n})$ is called the two-term pre-Yang-Baxter chain complex.  It's homology is called the two-term pre-Yang-Baxter homology $H_{n}(R).$

\end{remark} 

Here we demonstrate one concrete calculation of a face map.

\begin{example}

Let us consider the Yang-Baxter operator $R_{(m)}$ introduced in Definition \ref{def:YB-operator}. Let $a,b,c,d\in X_{(m)}$ be the basis elements with ordering $a\leq b\leq c\leq d$.  As shown in the left-hand side of Figure \ref{D4-compTree2-1}, 

\begin{align*}
    d_{4}^{l}(a\otimes b\otimes c\otimes d) &= (1-y^{2})d_{3}^{l} (a\otimes b\otimes c)\otimes d +y^{2}(d_{3}^{l}(a\otimes b\otimes d)\otimes c) \\
    &= (1-y^{2})[(1-y^{2})d_{2}^{l}(a\otimes b) \otimes c\otimes d + y^{2}d_{2}^{l}(a\otimes c) \otimes b\otimes d] \\
    &+ y^{2}[(1-y^{2})d_{2}^{l}(a\otimes b) \otimes d\otimes c + y^{2}d_{2}^{l}(a\otimes d) \otimes b\otimes c] \\
    &= (1-y^{2})^{3}(b\otimes c\otimes d)+y^{2}(1-y^{2})^{2}(a\otimes c\otimes d)\\
    &+y^{2}(1-y^{2})^{2}(c\otimes b\otimes d)+y^{4}(1-y^{2})(a\otimes b\otimes d) \\
    &+y^{2}(1-y^{2})^{2}(b\otimes d\otimes c) +y^{4}(1-y^{2})(a\otimes d\otimes c) \\
    &+y^{4}(1-y^{2})(d\otimes b\otimes c)+y^{6}(a\otimes b\otimes c).
\end{align*}

\end{example}

\begin{figure}[H]
\centering
\scalebox{.19}{\includegraphics{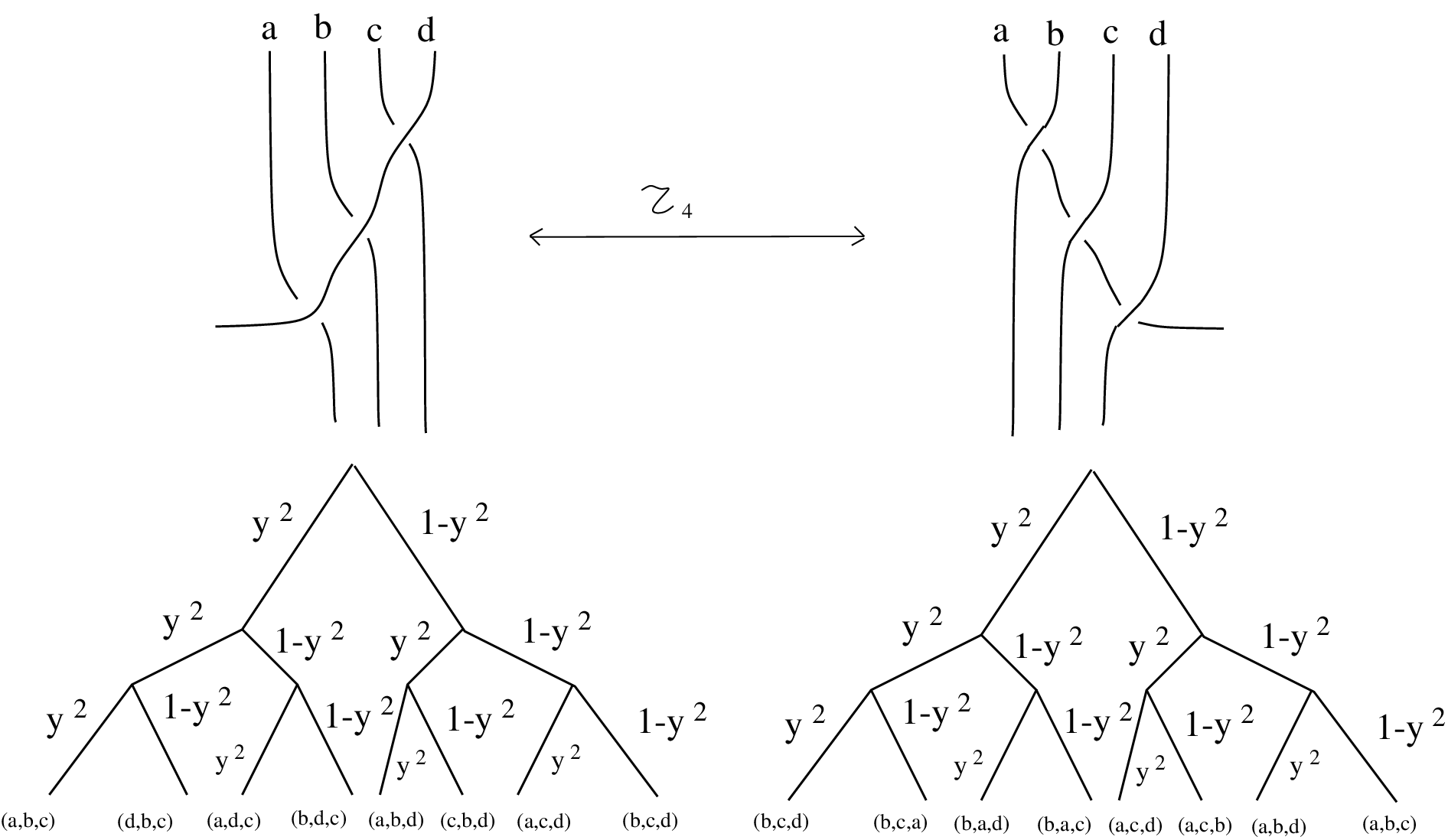}}
\caption{Computational tree for $d_4^{\ell}(a,b,c,d)$ where $a\leq b \leq c \leq d $ and its image under $\tau_4$}
\label{D4-compTree2-1}
\end{figure}

\section{Duality and ordering}\label{sec:DS}
\subsection{Duality and strictly order-preserving maps}
Functionally, the homology of $R_{(m)}$ is a homology theory of finite ordered sequences. The boundary map $\partial_n = \sum_{i=1}^{n} (-1)^{i}(d_i^l - d_i^r)$ has face maps $d_i^{\epsilon}$ defined in terms of the Yang-Baxter operator, $R:V \otimes V \to V \otimes V$. In turn, we note that $R(a,b) = \sum_{c,d} R^{a,b}_{c,d}(c,d)$ has Boltzmann weights depending \textit{only on the relative ordering of} $a,b \in X\m$. So we expect that maps which preserve the relative order of basis elements should accordingly preserve homology groups.

In this section, we put a finer point on this observation---that the relative ordering of basis elements is important for determining homology. First, we prove that maps between bases which strictly preserve order (including by completely reversing ordering) induce chain map isomorphisms. Then, we prove that weakly order-preserving maps yield chain maps.

\smallskip

The chain complex and homology of Yang-Baxter operators $R_{(m)}$ admits a natural involution, $(\tau_m)_{\#}: C^m_n \to C^m_n$ and   $(\tau_m)_* : H_n(R_{(m)})\to H_n(R_{(m)})$, $n\geq m$, defined on the level of chain modules $C^m_n= kX_{(m)}^n = (kX_{(m)})^{\otimes n}$. This involution has a nice graphical interpretation;  see Figure \ref{D4-compTree2-1}.

We will describe this involution step by step. For fixed $m$ and  $X_{(m)} = \{v_1, v_2, \dots, v_m \}$,  $\tau_m: X_{(m)} \to X_{(m)}$ is defined by\footnote{For simplicity we identify $v_i$ with $i$. Formally it should be $\tau_m(v_i)=v_{m+1-a}$.} $\tau_m(a)= m+1-a$. We extend $\tau_{m}$ to 
$X^n\m= \{(a_1,a_2,...,a_n)\in X^n \ | \ a_i \leq m \}$ by defining 
$$\tau_m(a_1,a_2,...,a_{n-1},a_n)=(\tau_m(a_n),\tau_m(a_{n-1}),...,\tau_m(a_2),\tau_m(a_1))=(m+1-a_n,...,m+1-a_1).$$
The important property of $\tau_m$ is that it is a chain map, or more generally, a morphism of precubic modules. That is:
\begin{proposition}\label{tau}
For $\tau$ as defined above, \\
\begin{enumerate}
\item[(1)] Since 
$$\tau_m(\partial_n(a_1,a_2,...,a_n))= (-1)^{n}\partial_n(\tau_m(a_1,...,a_n)),$$
\smallskip
the signed duality map $(-1)^{n-1}\tau_m$ is a chain map.

\smallskip

\item[(2)] We have a morphism of precubic modules that is
$$\tau_md_{i,n}^{\ell} = d^r_{n+1-i,n}\tau_m .$$
\end{enumerate}
\end{proposition}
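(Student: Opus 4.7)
The plan is to prove (2) first, then deduce (1). For (2), I would factor each face map into a sequence of $R$-applications and a single wall deletion, and check that $\tau_m$ intertwines each piece with its mirror. For (1), I would combine (2) with its symmetric version and reindex the alternating sum in $\partial_n$.

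From the graphical description of the face maps (compare the explicit computation of $d_4^\ell$ in Figure \ref{D4-compTree2-1}), one reads off the factorizations
\begin{equation*}
d_{i,n}^\ell = \varepsilon_\ell \circ R_{1,2} \circ R_{2,3} \circ \cdots \circ R_{i-1,i}, \qquad d_{j,n}^r = \varepsilon_r \circ R_{n-1,n} \circ R_{n-2,n-1} \circ \cdots \circ R_{j,j+1},
\end{equation*}
where $R_{k,k+1}$ denotes $R$ applied in the $k$-th and $(k+1)$-st tensor factors (identity elsewhere), and $\varepsilon_\ell,\varepsilon_r \colon V\m^{\otimes n} \to V\m^{\otimes (n-1)}$ delete the first and last factor respectively. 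The proof then rests on two commutation identities: the local statement $\tau_2\circ R = R\circ \tau_2$ on $V\m \otimes V\m$, which globalizes to $\tau_m \circ R_{k,k+1} = R_{n-k,\,n+1-k} \circ \tau_m$ on $V\m^{\otimes n}$; and the wall exchange $\tau_m \circ \varepsilon_\ell = \varepsilon_r \circ \tau_m$. The latter is immediate from the definition of $\tau_m$: both sides send $(a_1,\dots,a_n)$ to $(\bar\tau(a_n),\dots,\bar\tau(a_2))$ with $\bar\tau(a)=m+1-a$.

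The heart of the argument is the local lemma $\tau_2 R = R\tau_2$, which I would verify by a three-case check on an input $a\otimes b$: for $a=b$ both sides equal $\bar\tau(a)\otimes\bar\tau(a)$; for $a<b$, expand $R(a\otimes b)=(1-y^2)(a\otimes b)+y^2(b\otimes a)$ using Definition \ref{def:YB-operator}, apply $\tau_2$, and compare with $R(\bar\tau(b)\otimes\bar\tau(a))$, where the relative order $\bar\tau(b)<\bar\tau(a)$ makes the same Boltzmann weights appear; the case $a>b$ is similar. Conceptually this expresses the fact that the weights of $R$ depend only on the relative order of the inputs and outputs, and $\tau$ simultaneously inverts the order on $X\m$ and reverses tensor positions, so the two inversions compensate. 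The global version on $V\m^{\otimes n}$ is then formal: $\tau_m$ acts on the two slots where $R$ lives exactly as $\tau_2$ does, after the reindexing $k,k+1 \mapsto n-k,n+1-k$ dictated by the overall tensor-reversal.

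With these pieces in place, chasing $\tau_m$ through the factorization of $d_{i,n}^\ell$ telescopes each $R_{k,k+1}$ into $R_{n-k,\,n+1-k}$ and converts $\varepsilon_\ell$ into $\varepsilon_r$; the resulting composition is precisely $d_{n+1-i,n}^r \circ \tau_m$, proving (2). For (1), applying $\tau_m$ to both sides of (2) and using $\tau_m^2=\mathrm{id}$ gives the companion identity $\tau_m d_{i,n}^r = d_{n+1-i,n}^\ell \tau_m$. Substituting both into $\partial_n = \sum_{i=1}^n (-1)^i(d_i^\ell-d_i^r)$ and reindexing with $j=n+1-i$ produces the sign $(-1)^{n+1-j}$; factoring out $(-1)^{n+1}$ and relabeling yields $\tau_m \partial_n = (-1)^n \partial_n \tau_m$, which is (1). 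The main obstacle is simply the bookkeeping in the local case check and the telescoping: one must be careful that $\bar\tau$ reverses strict inequalities, and track how each $R_{k,k+1}$ gets relabeled as $\tau_m$ passes through it.
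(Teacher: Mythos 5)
Your proof is correct and follows essentially the same approach as the paper: the paper's one-line claim that the involution is ``respected on every level of the computational tree'' is exactly your local commutation $\tau_2\circ R = R\circ\tau_2$ together with the wall exchange $\tau_m\varepsilon_\ell = \varepsilon_r\tau_m$, propagated through the factorization of $d_i^\ell$ into $R$-applications. Yours is simply a rigorous spelling-out of that claim, with the key case analysis (relative order of $a,b$ versus $\bar\tau(b),\bar\tau(a)$) and the sign reindexing for (1) made explicit.
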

\begin{proof} (1) follows directly from (2).\\
To show (2) we check that on the every level of the computational tree for $d^{\ell}_{i}(a_1,....,a_n)$ and $d^r_{n+1-i}(\tau_m(a_n),...,\tau_m(a_1))$, the involution $\tau_m$ (rotation by $y$-axis) is respected. Compare Figure \ref{D4-compTree2-1}.
\end{proof}

We remark that our involution comes from the involution on the braid group $B_n$ sending the generator $\sigma_i$ to $\sigma_{n-i}$. That is,
If $\gamma \in B_n$ and $\gamma = \sigma_{n_1}^{\varepsilon_1} \sigma_{n_2}^{\varepsilon_2} ... \sigma_{n_k}^{\varepsilon_k}$ then 
$\tau_n(\gamma)$ is obtained from $\gamma$ by a half-twist; that is
$\tau_n(\gamma)=\sigma_{n-n_k}^{\varepsilon_k}...\sigma_{n-n_2}^{\varepsilon_2} \sigma_{n-n_1}^{\varepsilon_1}$. If $F: B_n \to \aut (kX^n_{(m)})$ is a representation of the braid group $B_n$ which sends $\sigma_i$ to $R_{(m)}$ then $\tau_m F(\gamma)= F\tau_n(\gamma)$. 
\begin{example}
Assume $a<b<c$ and $m=3$; then we compute that
$$-\partial_3(a,c,b)= (1-y^2)\bigg((b,c)-(a,c)+y^2((c,b)-(c,a))\bigg).$$
Then, by duality, by applying $\tau_3$ to both sides of the equation, we get:
$$-\partial_3(b,a,c)= (1-y^2)\bigg(-(a,b)+(a,c)+y^2(-(b,a)+(c,a))\bigg),$$ 
which of course agrees with the direct calculation.
\end{example}
\begin{remark}\label{Involutiontau}
We can graphically interpret the involution $\tau_m$ as a rotation about the $y$-axis (see Figure \ref{D4-compTree2-1}). As often happens, we can complement rotation by a shift along the $x$-axis. 
\end{remark}

\begin{proposition}\label{Shift}
\begin{enumerate}
\item[(1)] (Shift)\\ Let $X_{m,m}=X_{(m)}- \{m\}$ (the largest letter is not used) and $X_{m,1}=X_{(m)}- \{1\}$ (the smallest letter is not used).                                     Let $s_{m,n}: kX_{m,m} \to kX_{m,1}$ given by $s_{m,n}((a_1,a_2,...,a_n)= (a_1+1,a_2+1,...,a_n+1)$. Then $s_{m,*}$ induces the precubic module isomorphism
and is therefore a chain map isomorphism.                                                                                                                                                 \item[(2)] (Filling or creating a gap)\\ Let $X_{m,k}=X_{(m)}- \{k\}$ (the set $X_{(m)}$ with the gap on level $k$).                                          Assume that $k<m$ and consider the map $s_k=s_{k,n}: kX_{m,k} \to kX_{m,k-1}$ given by                                                                                            $s_k(a_1,a_2,...,a_n)=(a'_1,a'_2,...,a'_n)$ with                                 \[ a'_i := \left\{      \begin{array}{ll}   a_i & \mbox{when $a_i > k$}, \\               a_i+1 & \mbox{when $a_i < k$}.
\end{array}                                                                              \right.                                                                                  \]                                                          Then $s_{k-1,n}(d_{i,n}^{\delta}(a_1,a_2,...,a_n)) = d_{i,n}^{\delta}s_{k,n}(a_1,a_2,...,a_n))$,
where $\delta \in \{\ell, r\}$. In particular, $s_*$ is an isomorphism of precubic modules so also an isomorphism on chain complexes.
\end{enumerate}
\end{proposition}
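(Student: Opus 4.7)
The plan is to exploit the central principle emphasized in this section: the Boltzmann weights $R^{ab}_{cd}$ of Definition \ref{def:YB-operator} depend only on the \emph{relative order} of $a$ and $b$, not on their particular values in $X_{(m)}$. Consequently, any strictly order-preserving bijection between subsets of generating sets should commute with $R$, hence with every face map, and so induce an isomorphism of precubic modules. Both parts (1) and (2) of the proposition fall into this framework and will be two instances of one general observation.

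First I would verify that the maps in question really are strictly order-preserving bijections of the indicated basis sets. For (1), $a \mapsto a+1$ is an order-isomorphism $X_{m,m} \to X_{m,1}$, so its tensor-power extension $s_{m,n}$ is a bijection $X_{m,m}^n \to X_{m,1}^n$ preserving the coordinatewise ordering. For (2), $s_k$ fixes values above the gap and shifts those below upward by one; since $k$ itself is absent from $X_{m,k}$, this defines a strictly order-preserving bijection onto the relevant gap-shifted basis set, extended coordinatewise to sequences.

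Next I would verify the core commutation identity at the level of $R$ itself: for any strictly order-preserving injection $s$ defined on a subset of $X_{(m)}$, and any $a,b$ in its domain,
\[
(s \otimes s)\, R(a,b) \;=\; R(s(a), s(b)).
\]
Because $R^{ab}_{cd} = 0$ unless $\{c,d\} = \{a,b\}$, only basis elements in the image of $s$ contribute to the right-hand side; the comparison then reduces to checking the four cases in Definition \ref{def:YB-operator} and observing that each case is governed purely by the relative ordering between $a$ and $b$, which $s$ preserves by hypothesis.

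Finally, I would lift this identity to the full face maps. Using the computational-tree description of $d_{i,n}^{\delta}$ illustrated in Figure \ref{D4-compTree2-1}, each internal node of the tree applies $R$ to an adjacent pair of tensor factors while the root applies a wall deletion; wall deletion commutes with $s$ automatically because $s$ acts coordinatewise. Inducting up the tree using the key identity above then yields $s \circ d_{i,n}^{\delta} = d_{i,n}^{\delta} \circ s$, which is precisely the precubic morphism condition. Since $s$ is a bijection of basis sets, the induced map on chain modules is an isomorphism, and assembling this into $\partial_n = \sum_i (-1)^i(d_i^\ell - d_i^r)$ gives a chain isomorphism. The only real subtlety will be the bookkeeping in the inductive step---keeping careful track of tensor positions and the compatibility between wall deletion (which drops the first or last coordinate) and the coordinatewise action of $s$---but this is routine once the Boltzmann-weight comparison is in hand.
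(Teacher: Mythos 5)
Your proposal is correct and follows the same route as the paper: the paper's proof simply observes that the computational tree for $d_{i,n}^{\delta}$ is unchanged (up to relabeling) under the shift because the Boltzmann weights depend only on relative order, and you are making exactly that observation precise by checking the commutation $(s\otimes s)R = R(s\otimes s)$ and propagating it up the tree together with wall deletion. The paper additionally reduces (1) to (2) by composition, whereas you treat both uniformly as instances of the same principle, but this is a cosmetic organizational difference rather than a different argument.
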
                                                                                                      \begin{proof} (1) follows directly from (2) and (2) follows from the property of the computational tree which is, up to shifting, the same for both $d_{i,n}^{\delta}(a_1,a_2,...,a_n)$ and also $d_{i,n}^{\delta}s_{k,n}(a_1,a_2,...,a_n)$.
\end{proof}          
Shifts and filling/creating gaps are special cases of order preserving maps, which play a crucial role in our paper and will be discussed in the following subsection.  In fact, strictly order-preserving maps on bases of equal cardinality can be seen as compositions of the filling/creating gap maps, and therefore induce chain map isomorphisms.


\begin{corollary}
Let $S \subset X_{(m)}$ and $T \subset X_{(m')}$ for $m, m' \in \mathbb{Z}$ and $|S| = |T|$. If $f:S \to T$ is strictly order-preserving, then $f_{\#}: (k S)_\bullet \to (k T)_{\bullet}$ is a chain map isomorphism.
\end{corollary}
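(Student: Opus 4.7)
The plan is to realize $f$ as a finite composition of elementary gap moves of the type considered in Proposition \ref{Shift}(2), each of which is a precubic module isomorphism; since a composition of precubic isomorphisms is a chain map isomorphism, the conclusion will follow.

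First I would pass to a common ambient alphabet: pick any $M \geq \max(m,m')$ and view $S, T$ as subsets of $X_{(M)}$ via the natural inclusions $X_{(m)}, X_{(m')} \hookrightarrow X_{(M)}$. Each such inclusion is a composition of inverses of the maps $s_k$ (which create gaps at the top), and hence is a composition of precubic isomorphisms on the corresponding chain complexes. After this reduction, writing $S = \{a_1 < \cdots < a_n\}$ and $T = \{b_1 < \cdots < b_n\}$, strict order-preservation forces $f(a_i) = b_i$. I would then decompose $f$ into a finite product of elementary gap moves $s_k^{\pm 1}$ by ``walking'' from $S$ to $T$ one unit at a time: at each step I shift a single element of the current subset up or down by one via an appropriate $s_k$ or $s_k^{-1}$, decreasing the quantity $\sum_i |a_i - b_i|$ by one. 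Since this is a nonnegative integer, the walk terminates after finitely many steps and expresses $f_\#$ as a composition of (restrictions of) the $s_k^{\pm 1}$.

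The one technical obstacle is that Proposition \ref{Shift}(2) is stated for $(m{-}1)$-element subsets of $X_{(m)}$ (``one-gap'' subsets), whereas our intermediate subsets along the walk generally have several gaps. However, the proof of Proposition \ref{Shift}(2) relies only on the fact that the computational tree for a face map is determined by the relative ordering of the letters, not on the ambient alphabet size; so the same argument shows that for any subset $S' \subseteq X_{(M)}$ not containing level $k$, the restriction of $s_k$ to $S'$ is a precubic isomorphism onto $s_k(S')$. Once this extension is in hand, chaining the finitely many elementary isomorphisms together gives the desired chain map isomorphism $f_\# : (kS)_\bullet \to (kT)_\bullet$.
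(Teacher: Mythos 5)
Your overall strategy matches the paper's: embed $S,T$ in a common alphabet $X_{(M)}$ and realize $f$ as a composition of the maps from Proposition \ref{Shift}, using the fact that those moves restrict to precubic isomorphisms on arbitrary subsets avoiding the gap level. You go further than the paper in explicitly flagging and resolving the gap between the ``one-hole'' hypothesis in Proposition \ref{Shift}(2) and the many-hole intermediate subsets that arise, which the paper leaves implicit --- that is a genuine improvement in rigor.

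There is, however, a small but real imprecision in your walk. The map $s_k$ restricted to a subset $S'$ with $k \notin S'$ does not ``shift a single element up or down by one'': by definition it shifts \emph{every} element of $S'$ below level $k$ up by one (and fixes those above). So a single application of $s_k^{\pm 1}$ generically moves a whole block of letters, and the potential $\sum_i |a_i - b_i|$ need not decrease by exactly one, nor can an arbitrary one-element nudge be realized by a single $s_k^{\pm 1}$. The fix is easy and doesn't change the spirit of the argument: rather than walking directly from $S$ to $T$, move each of $S$ and $T$ to the canonical position $X_{(n)} = \{v_1,\dots,v_n\}$ by repeatedly eliminating gaps with (restricted) $s_k$ moves --- each such move strictly reduces the number of gaps below $\max S$, so the process terminates --- and then compose. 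With that adjustment, your argument is correct and, in its treatment of the multi-gap case, more careful than the paper's one-line proof.
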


\begin{proof}
We naturally identify $X_{(m)}, X_{(m')}$ as ordered subsets of $X_{(\max\{m,m'\})}$. Then any strictly order-preserving map $f:S \to T$ can be regarded as the  composition of maps from (1) and (2) in \ref{Shift}, and thus, induces a chain map isomorphism.
\end{proof}

\subsection{Weakly order-preserving maps are chain maps}

In fact, more is true, in that even weakly order-preserving maps $X\m \to X_{(m')}$ yield chain maps. Since such maps need not preserve Boltzmann weights, we make essential use of the column-unital condition. 

\begin{lemma}\label{lem:weak-order}
    Let $R\m$ and $R_{(m')}$ be Yang-Baxter operators as  in Definition \ref{def:YB-operator} on $kX\m$ and $kX_{(m')}$, respectively. Then a weakly order-preserving map $f: X\m \to X_{(m')}$ satisfies the condition \[ R_{(m')} \circ (f \otimes f) = (f \otimes f) \circ R\m. \tag{$1$} \label{morph-cond} \] 
\end{lemma}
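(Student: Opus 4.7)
The plan is to verify the identity (\ref{morph-cond}) by direct evaluation on every pure tensor $a \otimes b$ with $a,b \in X_{(m)}$, doing a short case split on the relative order of $a$ and $b$ and of $f(a)$ and $f(b)$. From Definition \ref{def:YB-operator}, the formula for $R_{(m)}$ simplifies to
\[ R_{(m)}(a \otimes b) = \begin{cases} b \otimes a & \text{if } a \geq b, \\ (1 - y^2)(a \otimes b) + y^2 (b \otimes a) & \text{if } a < b, \end{cases} \]
and an entirely analogous formula holds for $R_{(m')}$ applied to $f(a) \otimes f(b)$.

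First I would handle the case $a \geq b$. Weak order-preservation of $f$ forces $f(a) \geq f(b)$, so both sides of (\ref{morph-cond}) evaluate to $f(b) \otimes f(a)$ by the same rule applied in $kX_{(m')}$. Next, when $a < b$, weak order-preservation only yields $f(a) \leq f(b)$, splitting into two sub-cases. If the inequality $f(a) < f(b)$ is strict, the defining formula for $R_{(m')}$ applies with identical coefficients $1 - y^2$ and $y^2$, and the identity holds term by term after pushing $f \otimes f$ through.

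The main (and only nontrivial) step is the remaining sub-case $a < b$ with $f(a) = f(b)$. Here the left-hand side collapses to $(f \otimes f) R_{(m)}(a \otimes b) = \bigl[(1-y^2) + y^2\bigr] f(a) \otimes f(a) = f(a) \otimes f(a)$, while on the right $R_{(m')}(f(a) \otimes f(a)) = f(a) \otimes f(a)$ since $R_{(m')}$ fixes the diagonal. This is precisely where the column-unital normalization is indispensable: without the relation $(1-y^2) + y^2 = 1$, the collapsed coefficient would not equal $1$. Once this bookkeeping is in place, no substantial obstacle remains; the warning in the lemma that weakly order-preserving maps need not preserve Boltzmann weights individually is reconciled by the observation that any Boltzmann weights merged by $f$ add up correctly thanks to column-unitality.
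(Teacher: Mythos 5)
Your proof is correct and follows essentially the same route as the paper's: both evaluate the two compositions on a basic tensor $a \otimes b$, and both reduce to the observation that if $f$ collapses $a < b$ to $f(a) = f(b)$ then column-unitality makes the coefficients $(1-y^2) + y^2$ merge to $1$, while if $f(a) \neq f(b)$ then $f$ strictly preserves order on $\{a,b\}$ and the Boltzmann weights carry over unchanged. You simply make the intermediate "swap or weighted-combination" form of $R_{(m)}$ and the three-way case split explicit, where the paper argues directly with the sums of Boltzmann weights; the only blemish is that you label $(f\otimes f)\circ R_{(m)}$ as the left-hand side when in equation \eqref{morph-cond} it is the right-hand side, a purely cosmetic swap.
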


\begin{proof}
    Applying both of the above to $(a,b)$, we get \begin{align*} 
        (R_{(m')} \circ (f \otimes f))(a,b) &= \sum_{(c,d)} R^{ab}_{cd} \cdot (f(c),f(d)) \\
        ((f \otimes f) \circ R\m)(a,b) &= \sum_{(x,y)} R^{f(a)\,f(b)}_{xy} \cdot (x,y).
    \end{align*} The Boltzmann weights are nonzero only if $\{ c,d \} = \{a,b\}$ for the first sum and $\{x,y\} = \{f(a),f(b)\}$ for the second sum. If $f(a) = f(b)$, then the only tuple involved is $(f(a), f(a))$, and so the coefficients of both sums above add to 1 by the column-unital condition. If instead $f(a) \neq f(b)$, then $f$ preserves order strictly, and hence preserves the Boltzmann weights; thus, the two sums are again the same. 
\end{proof}

\begin{proposition}\label{prop:morph-chain}
    Any map $f: X\m \to X_{(m')}$ satisfying Equation \eqref{morph-cond} yields a chain map $f_\sharp$ by $f_\sharp = f^{\otimes n}: C_\bullet^m \to C_\bullet^{m'}$. 
\end{proposition}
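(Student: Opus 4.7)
The plan is to show directly that $f^{\otimes n}$ commutes with the boundary $\partial_n = \sum_{i=1}^{n}(-1)^{i}(d_i^\ell - d_i^r)$; since $\partial_n$ is a signed combination of the face maps, it suffices to check $d_i^\epsilon \circ f^{\otimes n} = f^{\otimes (n-1)} \circ d_i^\epsilon$ for each $i \in \{1,\ldots,n\}$ and each $\epsilon \in \{\ell, r\}$.

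First I unpack the face maps using the graphical description in Figure \ref{face map} together with the sample computation in Figure \ref{D4-compTree2-1}. Because $R_{(m)}$ is column-unital, the wall interaction amounts to the augmentation $\epsilon: kX \to k$ sending every basis vector to $1$, so $d_i^\ell$ factors as
\[
d_i^\ell = (\epsilon \otimes \mathrm{id}^{\otimes(n-1)}) \circ (R \otimes \mathrm{id}^{\otimes(n-2)}) \circ (\mathrm{id} \otimes R \otimes \mathrm{id}^{\otimes(n-3)}) \circ \cdots \circ (\mathrm{id}^{\otimes(i-2)} \otimes R \otimes \mathrm{id}^{\otimes(n-i)}),
\]
braiding the $i$-th strand leftwards past strands $i-1, \ldots, 1$ and then absorbing the leftmost factor into the wall; $d_i^r$ admits the mirror-image factorization, with $R$-layers and augmentation acting on the right.

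The main step is to push $f^{\otimes n}$ through this composition one layer at a time. At each $R$-layer, Lemma \ref{lem:weak-order} supplies the identity $R_{(m')} \circ (f \otimes f) = (f \otimes f) \circ R_{(m)}$, which by tensoring with identities extends immediately to
\[
(\mathrm{id}^{\otimes j} \otimes R_{(m')} \otimes \mathrm{id}^{\otimes k}) \circ f^{\otimes n} = f^{\otimes n} \circ (\mathrm{id}^{\otimes j} \otimes R_{(m)} \otimes \mathrm{id}^{\otimes k})
\]
for any $j+k = n-2$. At the wall layer, $\epsilon_{(m')} \circ f = \epsilon_{(m)}$ because both sides send every basis element of $X_{(m)}$ to $1$, so $f^{\otimes n}$ also commutes with the wall, absorbing one tensor factor in the process. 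Composing these commutation identities layer by layer yields $d_i^\epsilon \circ f^{\otimes n} = f^{\otimes (n-1)} \circ d_i^\epsilon$, and summing with the appropriate signs gives $\partial_n \circ f_\sharp = f_\sharp \circ \partial_n$.

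There is essentially no obstacle here: the hypothesis \eqref{morph-cond} is engineered precisely to make the $R$-layers $f$-equivariant, and the column-unital structure of $R_{(m)}$ forces the wall operator to be the plain augmentation, which is automatically $f$-equivariant since $f$ sends basis vectors to basis vectors. The only thing to keep track of is the bookkeeping of which layer drops one copy of $f$.
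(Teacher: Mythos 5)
Your proof is correct and follows essentially the same route as the paper's: factor each face map $d_i^\epsilon$ into a sequence of $\id^{\otimes j}\otimes R\otimes\id^{\otimes k}$ layers followed by a wall-drop, commute $f^{\otimes n}$ past the $R$-layers via Lemma~\ref{lem:weak-order}, and observe that the wall-drop (which you correctly identify as the augmentation on a free module) is automatically $f$-equivariant since $f$ takes basis vectors to basis vectors. One small wording quibble: column-unitality is not what makes the wall operator the augmentation (that is built into the definition of the face maps, regardless of $R$); rather, column-unitality is what makes these face maps satisfy the precubic relations in the first place, but this does not affect the validity of your argument.
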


\begin{proof}
    The left and right boundary maps can be defined as 

    \begin{align*}
        d_i^\ell &= L_\text{wall} \circ R \otimes \id^{\otimes (n - 2)} \circ \cdots \circ \id^{\otimes (i - 2)} \otimes R \otimes \id^{\otimes (n - i)} \\
        d_i^r &= R_\text{wall} \circ \id^{\otimes (n - 2)} \otimes R \circ \cdots \circ \id^{\otimes (i - 1)} \otimes R \otimes \id^{\otimes (n - i - 1)}. 
    \end{align*} where we use $L_{\text{wall}}$ to mean the interaction with the left wall in which the leftmost element of the tensor basis is dropped, and similarly for $R_{\text{wall}}$. Then $f \otimes f$ commuting with $R$ implies that $f^{\otimes n}$ commutes with all the $\id^{\otimes k} \otimes R \otimes \id^{\otimes j}$ terms, and $f^{\otimes n}$ also commutes with dropping a right or left tensor factor in each basis element. 
\end{proof}

Combining Lemma~\ref{lem:weak-order} and Proposition~\ref{prop:morph-chain}, we have that any weakly order-preserving $f: X\m \to X_{(m')}$ yields a chain map $f_\sharp: C_\bullet^m \to C_{\bullet}^{m'}$.

\begin{corollary}\label{cor:split-mono}
    Any strictly order-preserving map $f: X\m \to X_{(m')}$ induces a split monomorphism $f_\sharp: C_\bullet^m \to C_\bullet^{m'}$ of chain complexes. It then follows that $f$ induces a split monomorphism $f_*$ on homology as well. 
\end{corollary}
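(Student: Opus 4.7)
The plan is to build an explicit weakly order-preserving retraction $r: X_{(m')} \to X_{(m)}$ for $f$, and then invoke the chain-map machinery established in Lemma~\ref{lem:weak-order} and Proposition~\ref{prop:morph-chain} to pull it back to the chain level. Since a strictly order-preserving map $f: X_{(m)} \to X_{(m')}$ is in particular injective, the image $f(X_{(m)}) = \{f(v_1) < f(v_2) < \cdots < f(v_m)\}$ is a distinguished ordered subset of $X_{(m')}$, so the retraction can be read off directly from the natural ordering.

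Concretely, I would define $r: X_{(m')} \to X_{(m)}$ by sending $w \in X_{(m')}$ to $v_i$, where $i$ is the largest index with $f(v_i) \leq w$, and by sending $w$ to $v_1$ if $w < f(v_1)$. By construction $r(f(v_i)) = v_i$, so $r \circ f = \id_{X_{(m)}}$. The map $r$ is weakly order-preserving: if $w \leq w'$ in $X_{(m')}$, then the largest index $i$ with $f(v_i) \leq w$ is no larger than the corresponding index for $w'$, so $r(w) \leq r(w')$.

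Next I would apply Lemma~\ref{lem:weak-order} and Proposition~\ref{prop:morph-chain} to conclude that $r_\sharp = r^{\otimes n}: C_\bullet^{m'} \to C_\bullet^{m}$ is a chain map. Since $f_\sharp = f^{\otimes n}$ on each tensor factor, we obtain
\[
r_\sharp \circ f_\sharp \;=\; (r \circ f)^{\otimes n} \;=\; \id_{X_{(m)}}^{\otimes n} \;=\; \id_{C_\bullet^m},
\]
which exhibits $f_\sharp$ as a split monomorphism of chain complexes, with $r_\sharp$ an explicit retraction.

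Finally, passing to homology, the functoriality of $H_n$ gives $(r_\sharp)_* \circ (f_\sharp)_* = \id_{H_n(R_{(m)})}$, so $f_*$ is a split monomorphism on homology as well. There is no real obstacle here beyond verifying that the order-theoretic retraction is weakly monotone, which is immediate; the key conceptual point is that the column-unital condition built into Lemma~\ref{lem:weak-order} is exactly what allows the \emph{non-injective} retraction $r$ to nonetheless yield a chain map, even though it collapses elements of $X_{(m')}$ not in the image of $f$.
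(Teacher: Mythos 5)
Your proof is correct and follows essentially the same route as the paper: construct a weakly order-preserving left-inverse and invoke Lemma~\ref{lem:weak-order} and Proposition~\ref{prop:morph-chain} to promote it to a chain-level retraction. The only difference is that you give one explicit formula for the retraction (the ``largest $i$ with $f(v_i)\leq w$'' map), whereas the paper merely observes that such left-inverses exist (in general non-uniquely) and illustrates with an example; both yield the same splitting $g_\sharp f_\sharp = \id_{C_\bullet^m}$.
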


\begin{proof}
    We just need to construct a weakly order-preserving left-inverse. In general this is not unique---for letters in $\im f$, there is no choice, but the other letters in $X_{(m')}$ can be sent anywhere, with the only restriction being that the resulting map $g: X_{(m')} \to X_{(m)}$ be weakly order-preserving. For example, the map $f: X_{(3)} \to X_{(4)}$ given below has left-inverses $g$ and $g'$.
    \begin{align*}
        a &\mapsto x \qquad \qquad \qquad \,\,\, a \mapsto x \\
        x \mapsto a \qquad \qquad \qquad \, b &\mapsto x \qquad \qquad \qquad \,\,\, b \mapsto y \\
        f : \quad y \mapsto c \qquad \qquad g: \quad c &\mapsto y \qquad \qquad g': \quad c \mapsto y \\
        z \mapsto d \qquad \qquad \qquad \, d &\mapsto z \qquad \qquad \qquad \,\,\, d \mapsto z
    \end{align*} Since these left-inverses are weakly order-preserving, they yield chain maps by the above. Since $g_\sharp f_\sharp = \id_{C_\bullet^{m}}$, $f_\sharp$ is a split monomorphism. 
\end{proof}

\begin{remark}
    In Section 3 we will use these left inverses to provide splittings of certain chain complexes, but there we will require additional assumptions to ensure the maps they induce are well-defined. 
\end{remark}

\section{Filtrations and splittings}
\label{sec:filtrations-splittings}

\subsection{Filtration by alphabet size}
\label{subsec:letters-used}
With the foundation laid out, we now set about to computing $H_n(R_{(m)}) : = H_n(C_{\bullet}^m).$ We do so by first decomposing $C_{\bullet}^m$ via succesive filtrations until we arrive at a presentation of $H_n(R_{(m)})$ as a direct sum of homology groups which are more easily computed. For a fixed $n$ and arbitrary $m$, we will obtain a decomposition of $C_\bullet^m$ into only $n + 1$ simpler pieces. This is the decomposition given in Theorem~\ref{DirectSumResult}.


The first step in this process uses the observation that $X_{(m - 1)}$ sits inside $X\m$. This gives us a filtration \[ C_\bullet^1 \subseteq C_\bullet^2 \subseteq \cdots \subseteq C_\bullet^{m}.  \] of chain complexes. From this filtration, we obtain for each $1 < p \leq m$ the following short exact sequence of chain complexes: \[ \begin{tikzcd}
    0 \arrow[r] & C_\bullet^{p - 1} \arrow[r, "\alpha"] & C_\bullet^p \arrow[r, "\beta"] & C_\bullet^p / C_\bullet^{p - 1} \arrow[r] & 0
\end{tikzcd} \tag{1} \label{intl-ses} \] where $\alpha$ is the inclusion and $\beta$ is the standard quotient map.

\begin{proposition} 
    The chain complex associated with this filtration is isomorphic to the associated graded chain complex: \[
        C_\bullet^{m} \cong \bigoplus_{p=2}^{m} C_\bullet^{p} / C_\bullet^{p-1}.
    \]
    It follows that:
    \[
        H_n(C_\bullet^{m}) \cong \bigoplus_{p=2}^{m} H_n(C_\bullet^{p} / C_\bullet^{p-1}). 
    \]
\end{proposition}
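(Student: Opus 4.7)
The plan is to upgrade the short exact sequence \eqref{intl-ses} to a chain-level direct sum decomposition by producing an explicit retraction of $\alpha$, then to iterate and pass to homology. The key input is Corollary \ref{cor:split-mono}: since the inclusion $X_{(p-1)} \hookrightarrow X_{(p)}$ is strictly order-preserving, $\alpha: C_\bullet^{p-1} \to C_\bullet^p$ is a split monomorphism of chain complexes. A convenient retraction $r: C_\bullet^p \to C_\bullet^{p-1}$ is furnished by $r = g_\sharp$, where $g: X_{(p)} \to X_{(p-1)}$ sends $v_i \mapsto v_i$ for $i < p$ and $v_p \mapsto v_{p-1}$; this $g$ is weakly order-preserving and fixes $X_{(p-1)}$ pointwise, so by Lemma \ref{lem:weak-order} and Proposition \ref{prop:morph-chain} it induces a chain map retracting $\alpha$.

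With this retraction in hand, the sequence \eqref{intl-ses} splits in the category of chain complexes, yielding an isomorphism $C_\bullet^p \cong C_\bullet^{p-1} \oplus \bigl(C_\bullet^p/C_\bullet^{p-1}\bigr)$ for every $2 \leq p \leq m$. Iterating this splitting (equivalently, inducting on $m$), we peel off one quotient at a time to obtain
\[ C_\bullet^m \cong C_\bullet^{m-1} \oplus \bigl(C_\bullet^m/C_\bullet^{m-1}\bigr) \cong \cdots \cong \bigoplus_{p=2}^m \bigl(C_\bullet^p/C_\bullet^{p-1}\bigr), \]
where we use the convention $C_\bullet^0 := 0$ so that the terminal term $C_\bullet^1 = C_\bullet^1/C_\bullet^0$ is absorbed into the indexing. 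Since taking $n$-th homology commutes with finite direct sums of chain complexes, the second displayed isomorphism in the proposition follows immediately from the first.

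The main (and only mildly delicate) point is verifying that $g_\sharp$ really is a chain map, not merely a module map. This is precisely what the column-unital condition is used for in Lemma \ref{lem:weak-order}: without it, a weakly order-preserving map need not commute with the Yang-Baxter operator $R_{(m)}$, and the candidate retraction would fail to respect the boundary. Beyond invoking this ingredient, the argument is a routine assembly of the machinery developed in Section \ref{sec:DS}.
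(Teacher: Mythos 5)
Your proposal is correct and follows essentially the same route as the paper: invoke Corollary~\ref{cor:split-mono} to split the short exact sequence~\eqref{intl-ses} at the chain level, then iterate (equivalently, induct on $m$) and pass to homology. You merely make explicit the retraction $g$ and the role of the column-unital condition, details the paper leaves implicit in its reference to Corollary~\ref{cor:split-mono}; note only that, as in the paper's statement, the displayed sum $\bigoplus_{p=2}^m$ should either start at $p=1$ (with $C_\bullet^0 := 0$, as your convention suggests) or carry an explicit $C_\bullet^1$ summand, since that initial term does not vanish.
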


\begin{proof}  
    Since the inclusion \( \alpha : C_\bullet^{p-1} \to C_\bullet^p \) in the short exact sequence \eqref{intl-ses} comes from the strictly order-preserving map $X_{(p - 1)} \to X_{(p)}$, the induced map $\alpha$ is a split monomorphism of chain complexes (see Corollary~\ref{cor:split-mono}). The claimed decomposition follows by induction on $m$.
\end{proof}

\subsection{Further reductions by filtrations}
\label{subsec:final-complex}

In the proof above, we have seen that the short exact sequence \[ \begin{tikzcd}
    0 \arrow[r] & C_\bullet^{m - 1} \arrow[r, "\alpha"] & C_\bullet^m \arrow[r, "\beta"] & C_\bullet^m / C_\bullet^{m - 1} \arrow[r] & 0
\end{tikzcd} \] of complexes gives rise to the splitting $C_\bullet^m \cong C_\bullet^{m - 1} \oplus C_\bullet^m / C_\bullet^{m - 1}$, and that we can use this inductively to further decompose the left factor. We turn our attention now to the right factor, first noting that the chain groups of the quotient complex $C_\bullet^m / C_\bullet^{m - 1}$ have basis elements consisting exactly of those (equivalence classes with representative) tuples that use the top letter $v_m$ at least once. We will therefore denote this chain complex by $C_\bullet^{m,1,m-1}$ to say that the basis elements must use the top letter at least once, and that otherwise all $m - 1$ bottom letters may be used. In general, we introduce the following notation: 


\begin{definition}\label{chain-module-def}
    We define $C_\bullet^{m,u,\ell}$ for $\ell \leq m - u$ inductively on $u\geq 0$. First, let $C_\bullet^{m,0,m} = C_\bullet^m$. Assuming  $C_\bullet^{m,u,m-u}
    $ has been defined, we take $C_\bullet^{m,u+1,m-u-1}$ to be the quotient chain complex of $C_\bullet^{m,u,m-u}$ by $C_\bullet^{m,u,m-u-1}$, where $C_\bullet^{m,u,m-u-1}$ is the sub chain complex of $C_\bullet^{m,u,m-u}$ whose chain groups have basis tuples that do not use the letter $v_{m-u}$ and use all $u$ top letters. Next, we define $C_\bullet^{m,u,\ell}$ for $\ell < m - u$ to be the sub chain complex of $C_\bullet^{m,u,m-u}$ without using $v_{\ell+1}, \dots , v_{m-u}$ in the basis tuples (but still using all $u$ top letters). We interpret the chain module $C_n^{m,u,\ell}$ as being the free $k$-module generated by those tuples that use the top $u$ letters at least once, and otherwise use only the bottom $\ell$ letters. This inductive procedure is given in more careful detail below (compare to the short exact sequence \eqref{ses-full}). We adopt the convention that $C_\bullet^{m,u} = C_\bullet^{m,u,m-u}$. 
\end{definition}

Using this notation, we can rewrite the short exact sequence we considered in \ref{subsec:letters-used} as follows: \[ \begin{tikzcd}
    0 \arrow[r] & C_\bullet^{m, 0 , m - 1} \arrow[r, "\alpha"] & C_\bullet^{m, 0 , m} \arrow[r, "\beta"] & C_\bullet^{m, 1, m - 1}  \arrow[r] & 0
\end{tikzcd} \] (using the observation that $C_\bullet^{m,0,m - 1}$ is isomorphic to the chain complex $C_\bullet^{m - 1, 0, m - 1} = C_\bullet^{m - 1}$). To continue, we note that $C_n^{m,1,m-1}$ includes the free $k$-module with basis consisting of those tuples that use the top letter $v_m$ at least once, but otherwise use only the first $m - 2$ bottom letters $v_1, v_2, \dots, v_{m - 2}$. That is, $C_n^{m, 1, m - 2} \subsetneq C_n^{m, 1, m - 1}$. Continuing, we see that \[ C_n^{m,1,1} \subsetneq C_n^{m,1,2} \subsetneq \cdots \subsetneq C_n^{m,1,m - 2} \subsetneq C_n^{m,1,m - 1}. \] This holds for all $n$ and gives a filtration of the complex $C_\bullet^{m,1,m-1}$, since anything in $\partial C_n^{m,1,i}$ is a $k$-linear combination of $(n - 1)$-tuples on the letters $\{v_1, \dots, v_i, v_m\}$, and all tuples must involve the top letter $v_m$ at least once, since otherwise they belong in $C_{n - 1}^{m, 0, m - 1}$ and are therefore zero in $C_{n - 1}^{m,1,i} \subset C_{n - 1}^{m,1,m - 1} = C_{n - 1}^{m,0,m} / C_{n - 1}^{m, 0, m - 1}$. 

This means in particular that we have the following short exact sequence \[ \begin{tikzcd}
    0 \arrow[r] & C_\bullet^{m, 1, m - 2} \arrow[r, "\alpha"] & C_\bullet^{m, 1, m - 1} \arrow[r, "\beta"] & C_\bullet^{m, 1, m - 1} / C_\bullet^{m, 1, m - 2} \arrow[r] & 0.
\end{tikzcd} \] Thinking about what is left of $C_n^{m, 1, m - 1}$ when taking the quotient by $C_n^{m,1,m-2}$, we see that the quotient is $C_n^{m, 2, m - 2}$, whose basis consists of $n$-tuples on $m$ letters that must use the top two letters $v_m$ and $v_{m - 1}$ at least once. Note that this is the second step of the inductive process in Definition~\ref{chain-module-def}.

Continuing this procedure, we see that for each $u < m - 1$ and $1 < \ell \leq m - u$, we have a filtration \[ C_n^{m,u,1} \subsetneq C_n^{m,u,2} \cdots \subsetneq C_n^{m, u, \ell - 1}\subsetneq C_n^{m,u,\ell}. \] Looking again at just the top two terms of these filtrations, we get short exact sequences \[ \begin{tikzcd}
    0 \arrow[r] & C_\bullet^{m, u, \ell - 1} \arrow[r, "\alpha"] & C_\bullet^{m, u, \ell} \arrow[r, "\beta"] & C_\bullet^{m, u + 1, \ell - 1} \arrow[r] & 0 \tag{2} \label{ses-full}
\end{tikzcd} \] each of which leads to a long exact sequence in homology.

At this point it becomes more convenient to discuss the chain groups $C_\bullet^{m,u,\ell}$ slightly differently:

\begin{remark}\label{rmk:notational-isom}
    If $u + \ell < m$, the alphabet for $C_\bullet^{m,u,\ell}$ has $m$ total letters, but to form tuples, we can draw only from the $u + \ell$ letters in $\{v_1, \dots, v_\ell, v_{m - u + 1}, \dots, v_m\}$. For this reason, we always have $C_\bullet^{m,u,\ell} \cong C_\bullet^{\ell + u, u, \ell}$. This allows us to think of $C_\bullet^{\ell + u, u, \ell}$ as a subcomplex of $C_\bullet^{m, u, m - u}$ via the strictly order-preserving map taking $v_i$ to itself when $i \leq \ell$ and taking $v_i$ to $v_{(m - u) + (i - \ell)}$ when $i > \ell$. 
\end{remark}

With this remark, we can rewrite the filtration above up to isomorphism as 

\[ C_n^{u + 1,u,1} \subsetneq C_n^{u + 2,u,2} \cdots \subsetneq C_n^{m - 1, u, m - u - 1}\subsetneq C_n^{m,u,m-u}, \] and then we have the short exact sequences \[ \begin{tikzcd}
    0 \arrow[r] & C_\bullet^{m - 1, u, m - u - 1} \arrow[r, "\alpha"] & C_\bullet^{m, u, m - u} \arrow[r, "\beta"] & C_\bullet^{m, u + 1, m - u - 1} \arrow[r] & 0
\end{tikzcd} \] from comparing the top two terms of the filtration. The advantage of this viewpoint is that we  can talk about everything now in terms of alphabets consisting only of consecutive letters. It also allows us to simplify the notation, because the third parameter $\ell = m - u$ is redundant. We can then write the short exact sequence as \[ \begin{tikzcd}
    0 \arrow[r] & C_\bullet^{m - 1, u} \arrow[r, "\alpha"] & C_\bullet^{m, u} \arrow[r, "\beta"] & C_\bullet^{m, u + 1} \arrow[r] & 0.
\end{tikzcd} \tag{3} \label{u-complex} \]

\begin{proposition}\label{prop:splitting}
    The short exact sequences of chain modules \eqref{u-complex} are split, giving $C_\bullet^{m,u} \cong C_\bullet^{m-1,u} \oplus C_\bullet^{m,u+1}$ as chain complexes for $u < m - 1$. 
\end{proposition}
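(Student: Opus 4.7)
The plan is to construct an explicit chain-level retraction $\bar g_\sharp : C_\bullet^{m,u} \to C_\bullet^{m-1,u}$ of $\alpha$, using the theory of weakly order-preserving maps from Section~\ref{sec:DS}. Under the identification of Remark~\ref{rmk:notational-isom}, the inclusion $\alpha$ is induced by the strictly order-preserving map $f: X_{(m-1)} \to X_{(m)}$ that skips $v_{m-u}$ (sending $v_i \mapsto v_i$ for $i < m-u$ and $v_i \mapsto v_{i+1}$ for $i \geq m-u$). By Corollary~\ref{cor:split-mono} such $f$ admits weakly order-preserving left-inverses at the level of full complexes, and the real task is to choose one that descends to the quotient complexes $C_\bullet^{m,u}$ and $C_\bullet^{m-1,u}$.

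I would take $g : X_{(m)} \to X_{(m-1)}$ defined by $g(v_i) = v_i$ for $i < m - u$, $g(v_{m-u}) = v_{m-u-1}$ (``collapse downward''), and $g(v_i) = v_{i-1}$ for $i > m-u$. Note that $v_{m-u-1}$ exists because the hypothesis $u < m - 1$ gives $m - u - 1 \geq 1$. The map $g$ is weakly (but not strictly) order-preserving and satisfies $g \circ f = \operatorname{id}_{X_{(m-1)}}$; by Lemma~\ref{lem:weak-order} and Proposition~\ref{prop:morph-chain} it therefore induces a chain map $g_\sharp : C_\bullet^m \to C_\bullet^{m-1}$.

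The central step is verifying that $g_\sharp$ descends to a well-defined chain map $\bar g_\sharp : C_\bullet^{m,u} \to C_\bullet^{m-1,u}$. Recall that $C_\bullet^{m,u}$ is the quotient of $C_\bullet^m$ by the subcomplex $K_m$ spanned by tuples missing at least one of the top letters $\{v_{m-u+1},\ldots,v_m\}$, and similarly $C_\bullet^{m-1,u} = C_\bullet^{m-1}/K_{m-1}$ where $K_{m-1}$ is spanned by tuples missing at least one of $\{v_{m-u},\ldots,v_{m-1}\}$. For $j = 1, \ldots, u$ we have the single-point preimage $g^{-1}(v_{m-u+j-1}) = \{v_{m-u+j}\}$, so any tuple missing the top letter $v_{m-u+j}$ is sent by $g_\sharp$ to a tuple missing the top letter $v_{m-u+j-1}$; hence $g_\sharp(K_m) \subseteq K_{m-1}$ and the descent $\bar g_\sharp$ exists. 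Combined with $g \circ f = \operatorname{id}$ this gives $\bar g_\sharp \circ \alpha = \operatorname{id}_{C_\bullet^{m-1,u}}$, which splits the sequence \eqref{u-complex} and produces the claimed isomorphism $C_\bullet^{m,u} \cong C_\bullet^{m-1,u} \oplus C_\bullet^{m,u+1}$.

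The hard part is precisely this descent verification, which is the ``additional assumption'' foreshadowed by the remark closing Section~\ref{sec:DS}. The subtlety is that there are a priori two weakly order-preserving choices for $g(v_{m-u})$, namely $v_{m-u-1}$ and $v_{m-u}$, and only the first one descends. If we instead chose $g(v_{m-u}) = v_{m-u}$ then $g^{-1}(v_{m-u}) = \{v_{m-u}, v_{m-u+1}\}$, and a tuple missing the top letter $v_{m-u+1}$ but using $v_{m-u}$ would be mapped to a tuple using every one of $\{v_{m-u},\ldots,v_{m-1}\}$, escaping $K_{m-1}$. Thus the hypothesis $u < m - 1$ is what makes the safe ``collapse downward'' option available and thereby enables the splitting.
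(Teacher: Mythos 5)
Your proposal is correct and follows essentially the same strategy as the paper: you identify $\alpha$ with $f_\sharp$ for the same strictly order-preserving $f$ that skips $v_{m-u}$, choose the same left-inverse $g$ (collapsing $v_{m-u}$ downward to $v_{m-u-1}$, which is exactly where the hypothesis $u < m-1$ enters), and reduce the claim to checking that $g_\sharp$ descends to the quotient complexes. The only difference is in how the descent is verified: the paper runs an induction on $u$ through its iterative definition of $C_\bullet^{m,u}$, asserting the key containment $\overline{\alpha}_{u-1}(C_\bullet^{m,u-1,m-u}) \subset C_\bullet^{m-1,u-1,m-u-1}$ at each step, whereas you give a direct, one-shot computation on the global kernel $K_m$ via the single-point preimages $g^{-1}(v_{m-u+j-1}) = \{v_{m-u+j}\}$ for $j = 1,\dots,u$, which lays bare the driving combinatorial fact --- that $g$ restricts to a bijection from the $u$ top letters of $X_{(m)}$ onto the $u$ top letters of $X_{(m-1)}$ --- and so is arguably the more transparent write-up of the same idea.
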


\begin{proof}
    The map $\alpha: C_\bullet^{m - 1, u} \to C_\bullet^{m,u}$ is induced by the map $f: \{v_1, \dots, v_{m - 1}\} \to \{v_1, \dots, v_m\}$ sending $v_{i}$ to $v_i$ for $i < m - u$ and to $v_{i+1}$ for $i \geq m - u$. Since this $f$ is strictly order-preserving, $\alpha$ as a map from $C_\bullet^{m - 1}$ to $C_\bullet^{m}$ has left-inverses that are also chain maps by Corollary~\ref{cor:split-mono}. Not all of these are well-defined on the quotient complexes, so we choose the left-inverse $g$ of $f$ given by the following: \[ g(v_i) = \begin{cases}
        v_i & i < m - u \\
        v_{i - 1} & i \geq m - u.
    \end{cases} \] Inductively we assume $g$ induces well-defined chain maps $\overline{\alpha}_{i}: C_\bullet^{m,i,m-i} \to C_\bullet^{m-1,i,m-1-i}$ for all $i\leq u-1$, with the base case $i = 0$ following from the results of Section~\ref{sec:DS} since $g$ is weakly order-preserving. Then $g$ also induces a well-defined chain map $\overline{\alpha}_u: C_\bullet^{m,u} \to C_\bullet^{m - 1, u}$ since $C_\bullet^{m,u} = C_\bullet^{m, u - 1, m - u + 1} / C_\bullet^{m, u - 1, m - u}$ by definition and $\overline{\alpha}_u|_{C_\bullet^{m,u - 1, m - u + 1}} = \overline{\alpha}_{u - 1}$ with $\overline{\alpha}_u(C_\bullet^{m, u - 1, m - u}) \subset C_\bullet^{m - 1, u - 1, m - u - 1}$. Thus $\overline{\alpha} = \overline{\alpha}_u$ is a chain map with $\overline{\alpha}\alpha = g_\sharp f_\sharp = \id_{C_\bullet^{m - 1, u}}$. 
\end{proof}

\color{black}

We now have a way of splitting all the chain complexes $C_\bullet^{m,u}$ into simpler pieces. Because the short exact sequence \eqref{u-complex} used in Proposition~\ref{prop:splitting} requires $u < m - 1$, the final time we are able to make use of this splitting is with $C_\bullet^{m,m - 2} \cong C_\bullet^{m - 1, m - 2} \oplus C_\bullet^{m,m - 1}$. Since these two pieces serve as basic building blocks, we introduce special notation for them.

\begin{notation}\label{notat:final}
    We let $C_\bullet^{mf}$ denote the complexes that arise at the final inductive step. That is, $C_\bullet^{mf} = C_\bullet^{m,m-1}$. 
\end{notation}

With this notation, the final decomposition is written $C_\bullet^{m, m - 2} \cong C_\bullet^{(m - 1)f} \oplus C_\bullet^{mf}$. 

\begin{remark}
    It would appear more natural to define the `final' complex to be $C_\bullet^{m,m}$ where basis tuples must use {\it all} $m$ letters at least once. Although the chain modules split, there is no splitting of chain {\it complexes}, as the following counterexample shows. We have that $H_2(C_\bullet^{2,1}) \cong k \oplus k/(1 - y^2) \oplus k/(1 - y^4)$, whereas $H_2(C_\bullet^{1,1}) \cong k$ and $H_2(C_\bullet^{2,2}) \cong k \oplus k/(1 - y^2)$. The proof of Proposition~\ref{prop:splitting} does not apply to prove this supposed splitting, because it depends on the existence of a left-inverse that gives a bijection between the $u$ top letters of the alphabets of $C_\bullet^{m - 1, u}$ and $C_\bullet^{m, u}$, which is not possible if $u = m - 1$. In the example given here, the map $g: \{v_1, v_2\} \to \{v_1\}$ inducing $g_\sharp: C_\bullet^{2,1} \to C_\bullet^{1,1}$ takes $v_1 \mapsto v_1$. In the first alphabet, $v_1$ is not a top letter, but in the second, it is. 
\end{remark}

We conclude this section with the main technical result of the paper. 

\begin{theorem}\label{DirectSumResult}
    The chain complex $C_\bullet^{m,u}$ has the following direct sum decomposition: \[ C_\bullet^{m,u} \cong \bigoplus_{j = u + 1}^m \binom{m - u - 1}{j - u - 1} C_\bullet^{jf} \] for $u < m - 1$. In particular, $u = 0$ gives \[ C_\bullet^m \cong \bigoplus_{j = 1}^m \binom{m - 1}{j - 1} C_\bullet^{jf}. \]
\end{theorem}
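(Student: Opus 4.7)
The plan is to prove the theorem by induction on $N = m - u$, using Proposition~\ref{prop:splitting} as the engine and Pascal's identity to combine the binomial coefficients that appear in successive applications. Since Proposition~\ref{prop:splitting} requires $u < m - 1$, the theorem's hypothesis matches exactly the range where the inductive step is available, with Notation~\ref{notat:final} providing the terminating identification $C_\bullet^{m,m-1} = C_\bullet^{mf}$.

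For the base case $N = 2$ (equivalently $u = m - 2$), one application of Proposition~\ref{prop:splitting} gives
\[ C_\bullet^{m,m-2} \cong C_\bullet^{m-1, m-2} \oplus C_\bullet^{m, m-1} = C_\bullet^{(m-1)f} \oplus C_\bullet^{mf}, \]
using Notation~\ref{notat:final} to identify both summands. The right-hand side of the theorem at $u = m - 2$ is $\binom{1}{0} C_\bullet^{(m-1)f} \oplus \binom{1}{1} C_\bullet^{mf}$, so the two sides agree.

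For the inductive step, suppose $N \geq 3$ and the theorem holds for all smaller values of $N$ (in the allowed range). Applying Proposition~\ref{prop:splitting} once yields $C_\bullet^{m,u} \cong C_\bullet^{m-1,u} \oplus C_\bullet^{m,u+1}$. Both summands have $(m\text{-parameter}) - (u\text{-parameter}) = N - 1$, and the condition $u < m - 2$ (from $N \geq 3$) ensures both lie in the inductive hypothesis's scope. Substituting the inductive expressions and collecting coefficients of $C_\bullet^{jf}$ for each $j$ from $u + 1$ to $m$, the middle terms combine via Pascal's identity $\binom{m-u-2}{j-u-1} + \binom{m-u-2}{j-u-2} = \binom{m-u-1}{j-u-1}$, while the endpoint $j = u + 1$ appears only in the first summand with coefficient $1 = \binom{m-u-1}{0}$ and $j = m$ appears only in the second with coefficient $1 = \binom{m-u-1}{m-u-1}$. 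This gives exactly the claimed decomposition, and the special case $u = 0$ follows by specialization.

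The argument is essentially formal once Proposition~\ref{prop:splitting} is available, so I do not anticipate substantive obstacles. The main delicate step is tracking the summation ranges correctly so that Pascal's identity merges the two sub-sums into a single sum with the right binomial coefficients across all $j$, including the two boundary indices, but this is pure bookkeeping.
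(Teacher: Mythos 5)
Your proposal is correct and takes essentially the same approach as the paper: induction on $m - u$, with Proposition~\ref{prop:splitting} supplying the decomposition at each step and Pascal's identity collecting the binomial coefficients. The only (cosmetic) difference is that you anchor the induction at $m - u = 2$, while the paper anchors it at $m - u = 1$, reading the formula there as a tautology via Notation~\ref{notat:final}; both are valid and the rest of the argument is identical.
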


\begin{proof}
    The proof is essentially contained in Figure~\ref{CompFiltr}. More formally, we argue by induction on $m - u$. For the base case when $m - u = 1$, the claim is just that $C_\bullet^{m,m - 1} = \binom{0}{0} C_\bullet^{mf} = C_\bullet^{mf}$, which is true by definition of the notation for final complexes. Assume now that the claim holds when $m - u = \ell$, and suppose $m - u = \ell + 1$. By Proposition~\ref{prop:splitting}, we have that $C_\bullet^{m,u} \cong C_\bullet^{m-1,u} \oplus C_\bullet^{m,u+1}$, and we can apply the induction hypothesis to both terms: \begin{align*}
        C_\bullet^{m,u} &\cong \bigoplus_{j = u + 1}^{m - 1} \binom{m - u - 2}{j - u - 1} C_\bullet^{jf} \oplus \bigoplus_{j = u + 2}^m \binom{m - u - 2}{j - u - 2} C_\bullet^{jf} \\
        &= C_\bullet^{(u + 1)f} \oplus \bigoplus_{j = u + 2}^{m - 1} \left( \binom{m - u - 2}{j - u - 1} + \binom{m - u - 2}{j - u - 2} \right) C_\bullet^{jf} \oplus C_\bullet^{mf} \\
        &= C_\bullet^{(u + 1)f} \oplus \bigoplus_{j = u + 2}^{m - 1} \binom{m - u - 1}{j - u - 1} C_\bullet^{jf} \oplus C_\bullet^{mf} = \bigoplus_{j = u + 1}^m \binom{m - u - 1}{j - u - 1} C_\bullet^{jf},
    \end{align*} as required. 
\end{proof}

\begin{figure}[H]
\centerline{\psfig{file=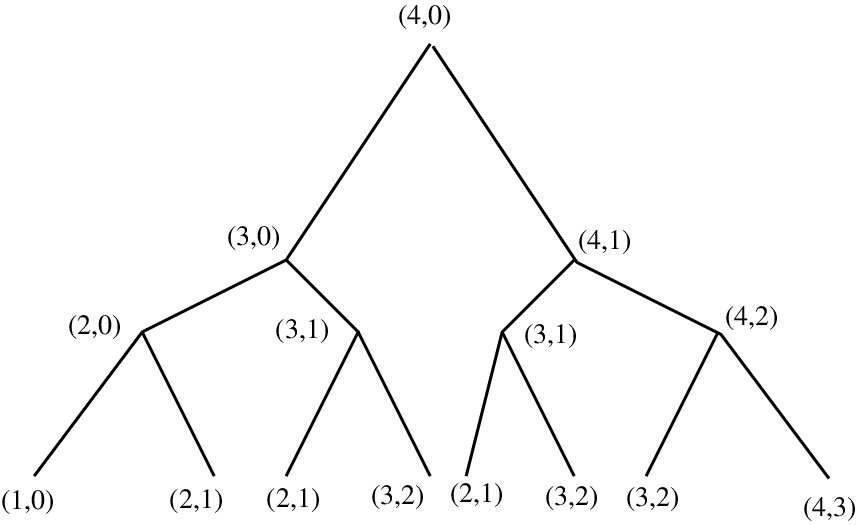,width=3.0in}}
\vspace*{5pt}
\caption{Illustration of the fact that $C_\bullet^4 \cong C_\bullet^{1,0} \oplus 3C_\bullet^{2,1} \oplus 3C_\bullet^{3,2} \oplus C_\bullet^{4,3}$, where e.g. $C_\bullet^{3,1}$ is represented by (3,1) in the tree. Note the paths ending at $(j, j - 1)$ go left $m - j$ times and right $j - 1$ times, so there are $\binom{m - 1}{j - 1}$ such paths, as claimed.}
\label{CompFiltr}
\end{figure}

The usefulness of this decomposition is that even though the number of terms depends on $m$, for $j > n + 1$, the chain module $C_n^{jf}$ has basis consisting of those $n$-tuples that use all $j - 1$ top letters, meaning the chain modules are zero after $j = n + 1$. Therefore the homology groups $H_n(C_\bullet^{jf})$ are also zero for $j > n + 1$. This means to compute the third and fourth homology for general $m$, we need only compute $H_3(C_\bullet^{jf})$ for $j \leq 4$ and $H_4(C_\bullet^{jf})$ for $j \leq 5$: \begin{align*} 
    H_3(C_\bullet^m) &\cong H_3(C_\bullet^{1f}) \oplus H_3(C_\bullet^{2f})^{\oplus (m - 1)} \oplus H_3(C_\bullet^{3f})^{\oplus \binom{m - 1}{2}} \oplus H_3(C_\bullet^{4f})^{\oplus \binom{m - 1}{3}} \tag{4} \label{3rd-decomp} \\
    H_4(C_\bullet^m) &\cong H_4(C_\bullet^{1f}) \oplus H_4(C_\bullet^{2f})^{\oplus (m - 1)} \oplus H_4(C_\bullet^{3f})^{\oplus \binom{m - 1}{2}} \\
    &\oplus H_4(C_\bullet^{4f})^{\oplus \binom{m - 1}{3}} \oplus H_4(C_\bullet^{5f})^{\oplus \binom{m - 1}{4}}. \tag{5} \label{4th-decomp}
\end{align*}

\section{Computation of $H_3$ and $H_4$ }\label{H3computation}

\subsection{Computation} \label{Computation}

The main result of this paper, Theorem \ref{DirectSumResult}, expresses $H_n(C_{\bullet}^{m})$ as the direct sum of homology groups of type $H_n(C^{j,j-1,1}_{\bullet})$ for $1 \leq j \leq  n+1$. In this section, we prove Conjecture~\ref{mainconjecture} on $H_3(C_{\bullet}^{m})$ by computing the requisite initial conditions; that is, we compute $H_3(C^{jf}_{\bullet}) = H_3(C^{j,j-1,1}_{\bullet})$ for $1 \leq j \leq 4$. Similarly, we compute $H_4(C^{m}_{\bullet})$ by computing $H_4(C_{\bullet}^{jf})$  for $1 \leq j \leq 5$. The former computation solves Conjecture 4.2 from \cite{PrWa2}; see Conjecture \ref{mainconjecture}.

\bigskip

We represent $\partial_4:C^{mf}_4 \to C^{mf}_3$ by the $\rank(C_3^{mf}) \times \rank(C_4^{mf})$ matrix with relations in columns and the basis as described in Section 3. We find the Smith Normal form\footnote{Notice that our ring, $k=\mathbb{Z}[y^2]$, is not a PID, and thus such a diagonalization (into Smith Normal form) is \textit{not} guaranteed. In fact, the diagonalization of boundary maps was performed by Mathematica over $\mathbb{Q}[y^2]$, not $\mathbb{Z}[y^2]$. However, the matrices encoding the row and column operations which diagonalize $\partial_n$ were verified as matrices over $\mathbb{Z}[y^2]$.} of this matrix over $k = \mathbb{Z}[y^2]$ and extract the torsion subgroups of $H_3(C_{\bullet}^{mf})$ from the diagonal and find $\rank \partial_4$. We then find  the matrix rank of the $\rank(C_2^{mf}) \times \rank(C_3^{mf})$ matrix representing $\partial_3: C^{mf}_3 \to C^{mf}_2$, which allows us to compute the free rank of $H_3(C^{mf}_\bullet)$ as $\rank(C_3^{mf}) - \rank \partial_4(C_4^{mf}) - \rank \partial_3(C_3^{mf})$. We outline the computation of $H_3(C_{\bullet}^{3f})$ as an example.

\begin{example}[Computing $H_3(C^{3f}_{\bullet})$]\label{computationm3n3} \ \\

    Let $m = 3$. We consider the portion of the chain complex 
    \[ C_4^{3f} \xrightarrow[]{\partial_4} C_3^{3f}  \xrightarrow[]{\partial_3} C_2^{3f}.\]

    A basis for $C_4^{3f}$ is the set of (classes of) $4$-tuples which each use the letters $2,3$ at least once. Explicitly, the basis has fifty elements, as follows:

    \begin{align*}
       & \{ (1, 1, 2, 3), (1, 1, 3, 2), (1, 2, 1, 3), (1, 2, 2, 3), (1, 2, 3, 
  1), (1, 2, 3, 2), (1, 2, 3, 3), (1, 3, 1, 2), \\ & (1, 3, 2, 1), (1, 3, 
  2, 2), 
   (1, 3, 2, 3), (1, 3, 3, 2), (2, 1, 1, 3), (2, 1, 2, 3), (2, 
  1, 3, 1), (2, 1, 3, 2), \\
  & (2, 1, 3, 3), (2, 2, 1, 3), (2, 2, 2, 
  3), (2, 2, 3, 1), (2, 2, 3, 2), (2, 2, 3, 3), (2, 3, 1, 1), (2, 3, 
  1, 2) \\ &
(2, 3, 1, 3), (2, 3, 2, 1), (2, 3, 2, 2), (2, 3, 2, 3), (2, 3, 3, 1), (2, 3, 3, 2), (2, 3, 3, 3),  (3, 1, 1, 2),  \\ & (3, 1, 2, 
  1), (3, 1, 2, 2), (3, 1, 2, 3), (3, 1, 3, 2), (3, 2, 1, 1), (3, 2, 
  1, 2), (3, 2, 1, 3), (3, 2, 2, 1), \\ &(3, 2, 2, 2), (3, 2, 2, 3), (3, 
  2, 3, 1),  (3, 2, 3, 2), (3, 2, 3, 3), (3, 3, 1, 2), (3, 3, 2, 
  1), (3, 3, 2, 2), \\ & (3, 3, 2, 3), (3, 3, 3, 2)
\}
    \end{align*}

 so $\rank C^{3f}_4 = 50.$  Similarly, a basis for $C_3^{3f}$ has the 12 following elements,
    \begin{align*}
         &  \{ (1, 2, 3), (1, 3, 2), (2, 1, 3), (2, 2, 3), (2, 3, 1), (2, 3, 2), (2, 3, 3), (3, 1, 2), \\ & \hspace{6 cm} (3, 2, 1), (3, 2, 2), (3, 2, 3), (3, 3, 2)  \};
    \end{align*}

so $\rank C_3^{3f} = 12$, and a basis for $C_2^{3f}$ has the two elements
    \[  \{ (2,3), (3,2) \} \] with $\rank C_3^{3f} = 2.$

\bigskip

We encode $\partial_3: C_3^{3;2,1} \to C_2^{3;2,1}$ as the $2 \times 12$ matrix with relations in columns. The rows and columns are indexed by the respective basis elements of $C_2^{3;2,1}$ and $C_3^{3;2,1}$ in lexicographic order. So, for instance, the first column says that $\partial((1,2,3)) = (1-y^2)(2,3) + y^2(1-y^2) (3,2).$ We diagonalize $\partial_3$ to find it has rank $1.$

    \[ \partial_3 = \left(
\begin{array}{cccccccccccc}
 1-y^2 & 1-y^2 & 0 & 1-y^2 & 0 & 0 & y^2-1 & 0 & 0 & 0 & 0 & 0 \\
 -y^2 \left(y^2-1\right) & y^2-y^4 & 0 & -y^2 \left(y^2-1\right) & 0 & 0 & y^2 \left(y^2-1\right) & 0 & 0 & 0 & 0 & 0 \\
\end{array}
\right)\]

\begin{align*}
    D_3 & =\left(
\begin{array}{cccccccccccc}
 y^2-1 & 0 & 0 & 0 & 0 & 0 & 0 & 0 & 0 & 0 & 0 & 0 \\
 0 & 0 & 0 & 0 & 0 & 0 & 0 & 0 & 0 & 0 & 0 & 0 \\
\end{array}
\right) \\
& = P_3 \partial_3 Q_3 \\
& = \left(
\begin{array}{cc}
 -1 & 0 \\
 -y^2 & 1 \\
\end{array}
\right) \partial_3 \left(
\begin{array}{cccccccccccc}
 1 & -1 & 0 & -1 & 0 & 0 & 1 & 0 & 0 & 0 & 0 & 0 \\
 0 & 1 & 0 & 0 & 0 & 0 & 0 & 0 & 0 & 0 & 0 & 0 \\
 0 & 0 & 1 & 0 & 0 & 0 & 0 & 0 & 0 & 0 & 0 & 0 \\
 0 & 0 & 0 & 1 & 0 & 0 & 0 & 0 & 0 & 0 & 0 & 0 \\
 0 & 0 & 0 & 0 & 1 & 0 & 0 & 0 & 0 & 0 & 0 & 0 \\
 0 & 0 & 0 & 0 & 0 & 1 & 0 & 0 & 0 & 0 & 0 & 0 \\
 0 & 0 & 0 & 0 & 0 & 0 & 1 & 0 & 0 & 0 & 0 & 0 \\
 0 & 0 & 0 & 0 & 0 & 0 & 0 & 1 & 0 & 0 & 0 & 0 \\
 0 & 0 & 0 & 0 & 0 & 0 & 0 & 0 & 1 & 0 & 0 & 0 \\
 0 & 0 & 0 & 0 & 0 & 0 & 0 & 0 & 0 & 1 & 0 & 0 \\
 0 & 0 & 0 & 0 & 0 & 0 & 0 & 0 & 0 & 0 & 1 & 0 \\
 0 & 0 & 0 & 0 & 0 & 0 & 0 & 0 & 0 & 0 & 0 & 1 \\
\end{array}
\right).
\end{align*}

We then diagonalize the $12 \times 50$ matrix representing $\partial_4: C_4^{3;2,1} \to C_3^{3;2,1}$ over $\mathbb{Z}[y^{ 2}]$. The matrices $P_4$ (encoding row operations) and $Q_4$ (encoding column operations) such that $D_4 = P_4 \partial_4 Q_4$ are shown in Figure \ref{P4form3} and Figure \ref{Q4form3} respectively.

\begin{figure}[H]
    \centering
    \includegraphics[width=0.75\linewidth]{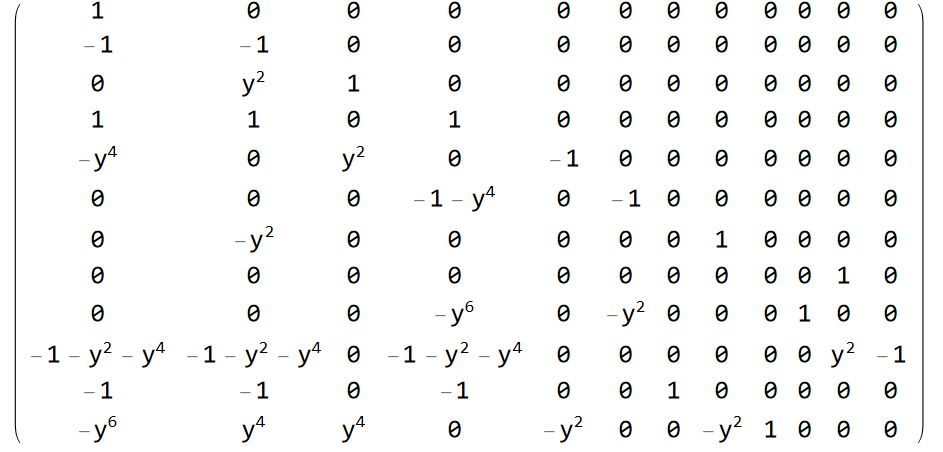}
    \caption{The row operation matrix $P_4$ \\ from the diagonalization of $\partial_4$.}
    \label{P4form3}
\end{figure}

\begin{figure}
    \centering
    \includegraphics[width=1\linewidth]{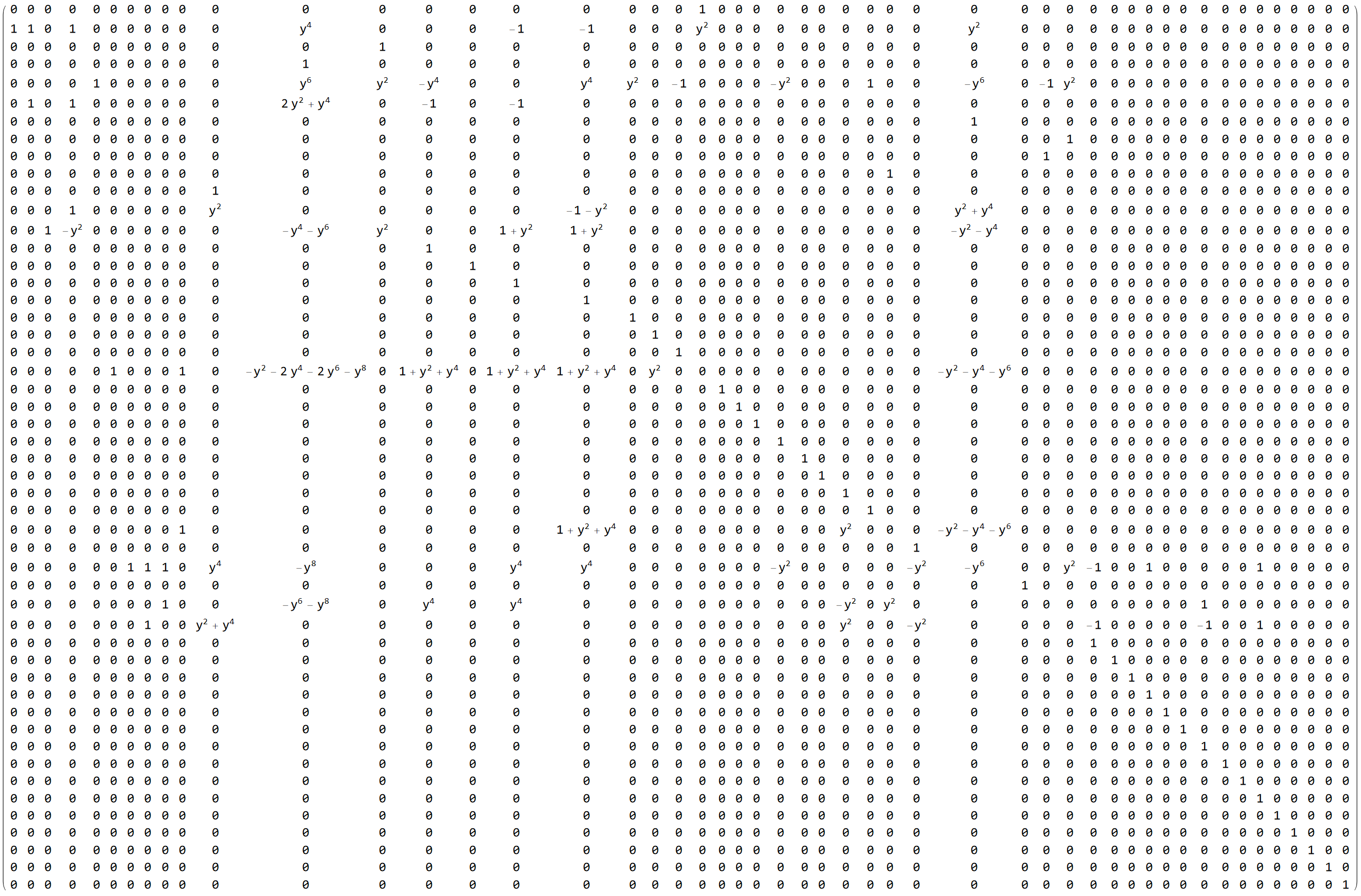}
    \caption{The column operation matrix $Q_4$ from the diagonalization of $\partial_4$.}
    \label{Q4form3}
\end{figure}

Finally, we can read the torsion subgroups of $H_3(C^{3f}_{\bullet})$ from the diagonal entries of $D_4$, which has diagonal $\{1 - y^2, 1 - y^2, 1 - y^2, 1 - y^2, 1 - y^2, 1 - y^2, 1 - y^2, 1 - y^2, 1 - y^4, 1 - y^4, 0, 0\}$, and compute the free rank as $\rank(C^{3f}_3) - \rank(\partial_4) - \rank(\partial_3)$ to conclude
\[ H_3(C_\bullet^{3f}) \cong k \oplus \left( \frac{k}{(1 - y^2)} \right)^{\oplus 8} \oplus \left( \frac{k}{(1 - y^4)} \right)^{\oplus 2}. \]



\end{example}

We recall that for $m>n+1$, $H_n(C_{\bullet}^{mf}) = 0$ since $C_n^{mf} = 0$. Thus, by following the example above and computing $H_3(C_{\bullet}^{mf})$ for $m=1,2,3,4$, we complete our computation of $H_3(C^{m}_{\bullet})$. The isomorphism type for $H_n(C_{\bullet}^{mf})$ is collected in the following table as triples $(a,b,c)$ where $H_n(C_{\bullet}^{mf}) \cong k^{\oplus a} \oplus \left( \frac{k}{(1 - y^2)} \right)^{\oplus b} \oplus \left( \frac{k}{(1 - y^4)} \right)^{\oplus c}$. 

\begin{remark}
    Note that the row and column operation matrices diagonalizing $\partial_n: C_n^{mf} \to C_{n-1}^{mf}$ for $1\leq m\leq 5$ and $1 \leq n \leq 5$ were too large to include in the text. They can be found at the following link: \url{https://tinyurl.com/PandQmatrices}.
\end{remark}

\begin{table}[H]
\begin{center}
\begin{tabular}{ c | c c c c c } 
   & $C_\bullet^1$ & $C_\bullet^{2f}$ & $C_\bullet^{3f}$ & $C_\bullet^{4f}$ & $C_\bullet^{5f}$  \\
   \hline 
   $H_1$ & (1,0,0) & (1,0,0) & \allzero & \allzero & \allzero \\
   $H_2$ & (1,0,0) & (1,1,1) & (1,1,0) & \allzero & \allzero  \\
   $H_3$ & (1,0,0) & (1,2,2) & (1,8,2) & (1,5,0) & \allzero \\
   $H_4$ & (1,0,0) & (1,6,4) & (1,33,6) & (1,51,3) & (1,23,0)  \\
\end{tabular}

\caption{Isomorphism types of $H_n(C^{mf}_{\bullet})$ for $1 \leq m \leq 5$ and $1 \leq n \leq 4$ \label{initialconditiondata}}
\end{center}

\end{table}

\begin{theorem}
    The third Yang-Baxter homology leading to the HOMFLYPT polynomial  is 
\begin{align*}
    H_3(C_\bullet^m) &\cong k^{\oplus \left( \binom{m-1}{0} + \binom{m-1}{1} + \binom{m - 1}{2} + \binom{m - 1}{3} \right)} \\
    &\oplus \left( \frac{k}{(1 - y^2)} \right)^{\oplus \left( 2 \binom{m - 1}{1} + 8\binom{m - 1}{2} + 5\binom{m - 1}{3} \right)} \\
    &\oplus \left( \frac{k}{(1 - y^4)} \right)^{\oplus \left(2\binom{m - 1}{1} + 2\binom{m - 1}{2} \right)} \\ 
   &  = k^{\frac{m(8-3m+m^2)}{6}}\oplus (k/(1-y^2))^{\frac{(m^2-1)(5m-6)}{6}}\oplus (k/(1-y^4))^{m(m-1)}
\end{align*}
\end{theorem}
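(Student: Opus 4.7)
The proof plan is essentially an exercise in plugging in. By equation~\eqref{3rd-decomp}, which is an immediate consequence of Theorem~\ref{DirectSumResult} applied at $u=0$ together with the vanishing $H_3(C_\bullet^{jf}) = 0$ for $j > 4$, we already have
\[ H_3(C_\bullet^m) \cong H_3(C_\bullet^{1f}) \oplus H_3(C_\bullet^{2f})^{\oplus (m-1)} \oplus H_3(C_\bullet^{3f})^{\oplus \binom{m-1}{2}} \oplus H_3(C_\bullet^{4f})^{\oplus \binom{m-1}{3}}. \]
So the first step is to import the four initial-condition isomorphism types from the $H_3$ row of Table~\ref{initialconditiondata}, namely $(1,0,0)$, $(1,2,2)$, $(1,8,2)$, and $(1,5,0)$, which are the triples $(a,b,c)$ encoding the free rank and the multiplicities of $k/(1-y^2)$ and $k/(1-y^4)$ summands respectively.

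Combining these term by term, I would record separately the multiplicities of each of the three types of summand. The free-rank multiplicity is $\binom{m-1}{0}+\binom{m-1}{1}+\binom{m-1}{2}+\binom{m-1}{3}$; the $(1-y^2)$-torsion multiplicity is $2\binom{m-1}{1}+8\binom{m-1}{2}+5\binom{m-1}{3}$; and the $(1-y^4)$-torsion multiplicity is $2\binom{m-1}{1}+2\binom{m-1}{2}$. This already gives the first displayed form of the theorem statement.

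The second displayed form is obtained by algebraic simplification of these three binomial expressions into the closed forms $m(8-3m+m^2)/6$, $(m^2-1)(5m-6)/6$, and $m(m-1)$. The simplifications are direct: for the free part one clears a common denominator of $6$ and collects to get $m^3-3m^2+8m$ over $6$; for the $(1-y^2)$ part one factors $(m-1)$ out first, clears denominators, and recognizes the resulting quadratic $5m^2-m-6$ as $(5m-6)(m+1)$; and for the $(1-y^4)$ part one just factors $(m-1)$ from $2(m-1)+(m-1)(m-2)$.

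The only nontrivial content of the proof lies entirely upstream: the decomposition Theorem~\ref{DirectSumResult} plus the explicit computation of $H_3(C_\bullet^{jf})$ for $j \leq 4$ worked out in Example~\ref{computationm3n3} and summarized in Table~\ref{initialconditiondata}. Once those are in hand, the present theorem is a purely formal assembly, and there is no real obstacle; the only thing to be careful about is checking the two closed-form simplifications agree with the binomial sums for at least a couple of small values of $m$ as a sanity check (e.g.\ $m=2,3,4$) before declaring the proof complete.
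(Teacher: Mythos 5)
Your proposal is correct and is essentially the paper's own proof: the paper simply cites Table~\ref{initialconditiondata} together with Theorem~\ref{DirectSumResult} (via equation~\eqref{3rd-decomp}), exactly as you do, and the remaining work is the routine binomial simplification you describe.
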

\begin{proof}
    Use initial conditions from Table \ref{initialconditiondata} in conjunction with Proposition \ref{DirectSumResult}.
\end{proof}

Employing the same method and approaching the limits of our computation capacity, we arrive at a solution for the fourth homology as well.

\begin{theorem}
        The fourth Yang-Baxter homology leading to the HOMFLYPT polynomial is 
        \begin{align*} 
        H_4(R\m) &\cong k^{\oplus \left( \binom{m - 1}{0} + \binom{m - 1}{1} + \binom{m - 1}{2} + \binom{m - 1}{3} + \binom{m - 1}{4} \right) } \\
        &\oplus \left( \frac{k}{(1 - y^2)} \right)^{\oplus \left( 6 \binom{m - 1}{1} + 33\binom{m - 1}{2} + 51\binom{m - 1}{3} + 23\binom{m - 1}{4} \right)} \\
        &\oplus \left( \frac{k}{(1 - y^4)} \right)^{\oplus \left(4\binom{m - 1}{1} + 6\binom{m - 1}{2} + 3\binom{m - 1}{3} \right)} \\
        &= \medmath{ \left( \frac{m^4 - 6m^3 + 23m^2 - 18m + 24}{24}, \frac{(m-1)(23m^{3}-3m^{2}-26m+24)}{24}, \frac{(m-1)(m^2+m+2)}{2} \right) } \\
   \end{align*}
\end{theorem}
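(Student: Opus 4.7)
The plan is to apply Theorem~\ref{DirectSumResult} in the form of the decomposition~\eqref{4th-decomp}, reducing the problem to the five initial conditions $H_4(C_\bullet^{jf})$ for $j = 1, 2, 3, 4, 5$, each weighted by the multiplicity $\binom{m-1}{j-1}$. Once these initial triples are known, the theorem follows by summing the weighted contributions and simplifying the resulting binomial expressions to closed polynomial form in $m$.

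For each $j \in \{1, \dots, 5\}$, I would proceed exactly in the spirit of Example~\ref{computationm3n3}, only one dimension higher. Using Definition~\ref{chain-module-def} and Notation~\ref{notat:final}, I first enumerate the basis of $C_n^{jf}$ for $n = 3, 4, 5$: these are $n$-tuples in which each of the top letters $v_2, \dots, v_j$ appears at least once and every other entry equals $v_1$. A quick inclusion--exclusion check gives, for instance, $\rank C_4^{5f} = 24$ (the permutations of $(v_2, v_3, v_4, v_5)$), $\rank C_5^{5f} = 360$, and $C_3^{5f} = 0$. I would then write down the matrices representing $\partial_5: C_5^{jf} \to C_4^{jf}$ and $\partial_4: C_4^{jf} \to C_3^{jf}$ using the Boltzmann weights of Definition~\ref{def:YB-operator}, compute Smith-style diagonalizations $D_n = P_n \partial_n Q_n$, and from the diagonal entries extract
$$H_4(C_\bullet^{jf}) \cong k^{\oplus r_j} \oplus \bigoplus_i k/(d_i),$$
where the free rank $r_j$ equals $\rank C_4^{jf} - \rank \partial_5 - \rank \partial_4$ and the $d_i$ are the nonunit, nonzero diagonal entries of the diagonalization of $\partial_5$. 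The case $j = 5$ is slightly simpler in that $C_3^{5f} = 0$, so $\ker \partial_4 = C_4^{5f}$ and only $\partial_5$ is needed.

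The principal obstacle is the combination of matrix size with the fact that the ground ring $k = \mathbb{Z}[y^2]$ is not a PID, so Smith normal form is not formally guaranteed. Following the authors' strategy, I would carry out the diagonalization over $\mathbb{Q}[y^2]$ using computer algebra and then verify a posteriori that the recorded change-of-basis matrices $P_n$ and $Q_n$ lie in $\mathrm{GL}$ over $\mathbb{Z}[y^2]$; only with such certification are the diagonal entries genuine torsion invariants. A secondary concern is ensuring that every nonunit, nonzero diagonal entry factors (up to a unit in $\mathbb{Z}[y^2]$) as $1 - y^2$ or $1 - y^4$, matching the torsion pattern established in the $H_3$ case; any unexpected factor would signal either a computational bug or a genuinely new type of torsion class arising in degree four.

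With the row $(H_4(C_\bullet^{jf}))_{j=1}^{5} = \bigl((1,0,0), (1,6,4), (1,33,6), (1,51,3), (1,23,0)\bigr)$ of Table~\ref{initialconditiondata} in hand, the conclusion is pure bookkeeping: substitute these triples into the decomposition~\eqref{4th-decomp} with weights $\binom{m-1}{j-1}$ and verify by direct polynomial expansion that the three coordinate sums collapse respectively to $(m^4 - 6m^3 + 23m^2 - 18m + 24)/24$, $(m-1)(23m^3 - 3m^2 - 26m + 24)/24$, and $(m-1)(m^2 + m + 2)/2$, completing the proof.
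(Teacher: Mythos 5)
Your proposal matches the paper's argument exactly: apply the decomposition of Theorem~\ref{DirectSumResult} (specialized as in~\eqref{4th-decomp}), compute the five initial conditions $H_4(C_\bullet^{jf})$ for $1\le j\le 5$ by diagonalizing $\partial_5$ and $\partial_4$ over $\mathbb{Z}[y^2]$ (with a posteriori verification of the change-of-basis matrices, since $\mathbb{Z}[y^2]$ is not a PID), and then substitute the resulting triples from Table~\ref{initialconditiondata} into the weighted direct sum. Your rank checks ($\rank C_4^{5f}=24$, $\rank C_5^{5f}=360$, $C_3^{5f}=0$) and the final polynomial collapse are all consistent with the paper.
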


\subsection{Remark on $\rank(C_n^{(mf)})$}
We pause to compute $\rank(C_n^{mf})$ for given $m,n$, the usefulness of which is demonstrated in Example \ref{computationm3n3}. According to Milan Janjic in \cite{Jan}, we get the following.

\begin{proposition}
    Let $\tilde S(n,m,u)= \rank C_n^{m,u,m-u}$ that is the number of functions from \textcolor{red}{$[n]$} to $[m]$ such that the image of a function contains fixed subset of $[m]$ of size $u$. Then we have $\tilde S(n,m,u)= \tilde S(n,m-1,u) + \tilde S(n,m,u+1)$ (compare Proposition 3.6). Then 
$\tilde S(n,m,m)=m!S(n,m)$, where $S(n,m)$ is the Stirling number of the second kind and 
$$\tilde S(n,m,m-1) = \tilde S(n,m-1,m-1) + \tilde S(n,m,m) = (m-1)!S(n,m-1) + m!S(m,n)= (m-1)!S(n+1,m).$$ 
Furthermore, $\tilde S(n,m,u)$ satisfies inclusion-exclusion formula (a slight generalization of that for the Stirling numbers):
$$\tilde S(n,m,u)=\sum_{i=0}^{u} (-1)^i {u \choose i}(m-i)^n.$$
\end{proposition}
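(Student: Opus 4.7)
The plan is to establish the four assertions in sequence, combining the chain-complex splitting of Section~\ref{sec:filtrations-splittings} with standard combinatorial identities.

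First, the recursion $\tilde S(n,m,u) = \tilde S(n,m-1,u) + \tilde S(n,m,u+1)$ follows immediately from Proposition~\ref{prop:splitting}: taking the rank of the $n$-th chain modules in the decomposition $C_\bullet^{m,u} \cong C_\bullet^{m-1,u} \oplus C_\bullet^{m,u+1}$ yields the claim. Alternatively, one may argue bijectively by partitioning the functions $f:[n]\to[m]$ whose image contains a fixed $u$-subset according to whether $v_{m-u}$ lies in the image of $f$: the ``yes'' case contributes $\tilde S(n,m,u+1)$ (the distinguished subset enlarges to a $(u+1)$-subset), and the ``no'' case contributes $\tilde S(n,m-1,u)$ (the codomain restricts to $[m]\setminus\{v_{m-u}\}$).

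Next, the identity $\tilde S(n,m,m) = m!\,S(n,m)$ is the classical count of surjections $[n] \twoheadrightarrow [m]$, since the condition $u = m$ forces the image to be all of $[m]$. Applying the recursion with $u = m-1$ and substituting for $\tilde S(n,m,m)$ and $\tilde S(n,m-1,m-1)$ yields the first equality $(m-1)!\,S(n,m-1) + m!\,S(n,m)$; the second equality then follows by multiplying the classical Stirling recurrence $S(n+1,m) = m\,S(n,m) + S(n,m-1)$ through by $(m-1)!$.

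For the inclusion-exclusion formula, I would fix a $u$-subset $T \subset [m]$ and apply standard inclusion-exclusion to the events $A_j = \{f:[n]\to[m] \mid j \notin \im f\}$ for $j \in T$. For each $S \subseteq T$ with $|S| = i$, the intersection $\bigcap_{j \in S} A_j$ consists of functions landing in $[m] \setminus S$, which has cardinality $(m-i)^n$. Since this depends only on $|S| = i$ and there are $\binom{u}{i}$ such subsets of $T$, the inclusion-exclusion sum collapses to $\sum_{i=0}^{u} (-1)^i \binom{u}{i} (m-i)^n$.

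No serious obstacle is anticipated: each step is a short algebraic or combinatorial observation. The only delicate bookkeeping is in the $u = m-1$ case, where one must carefully match indices in the Stirling recurrence to confirm the second equality; a quick check against small cases (say $m = 2, n = 2$) ensures no off-by-one error.
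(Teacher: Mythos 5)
Your proof is correct and fills in exactly the argument the paper gestures at but does not spell out: the recursion comes from Proposition~\ref{prop:splitting} (equivalently, your bijection on whether $v_{m-u}$ is hit), $\tilde S(n,m,m)$ is the surjection count, the $(m-1)!\,S(n+1,m)$ identity is the Stirling recurrence $S(n+1,m)=mS(n,m)+S(n,m-1)$ scaled by $(m-1)!$, and the closed form is standard inclusion--exclusion on the $u$ required letters. You also correctly read $m!\,S(m,n)$ in the paper as a typo for $m!\,S(n,m)$.
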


\begin{remark}
Table of values for $\tilde{S}(n,m,m-1).$ Notice that 
diagonal elements satisfy $\tilde{S}(m,m,m-1)=\frac{(m+1)!}{2}$  
 and the  subdiagonal satisfies $\tilde S(m,m+1,m)=m!$.

\begin{table}[H]
\begin{center}
\begin{tabular}{ c | c c c c c c  c}
m \textbackslash n & 1 & 2 & 3 & 4 & 5 & 6 & 7 \\ 
\hline 
1 & 1 & 1 & 1 & 1 & 1 & 1 & 1 \\
2 & 1 & 3 & 7 & 15 &31 &63 & 127\\
3 & 0 & 2 & 12 & 50 & 180 & 602 & 1932 \\
4 & 0 & 0 & 6 & 60 & 390 & 2100 & 10206 \\
5 & 0 & 0 & 0 & 24 &360 & 3360 & 25200\\
6 & 0 & 0 & 0 & 0 & 120 & 2520 & 31920 \\
7 & 0 & 0 & 0 & 0 & 0 & 720 & 20160 \\
\end{tabular}

\end{center}
\end{table}
\end{remark}




\section{Future work, speculations, and conjectures}

Both Yang-Baxter Homology and Khovanov-type homology generalize the Jones and HOMFLYPT polynomials. Naturally, then, a long-term goal is to connect these two theories. Our work in computing the third and fourth homology of the Yang-Baxter Operator leading to the Jones and HOMFLYPT polynomials is just the first step in gaining intuition in this direction. Another possible method of relating these two theories might be to find a geometric realization for Yang-Baxter homology to which we can compare Lipshitz-Sarkar's geometric realization of Khovanov homology  \cite{LiSa}.

In particular, we formulate the following conjectures about further calculations for the homology of $R_{(m)}$.



\begin{conjecture}[Free Rank of Yang Baxter]\label{free-rank}
    We conjecture that \[ \frank H_n(C_\bullet^{mf}) = \begin{cases}
        1 & m \leq n + 1 \\
        0 & m > n + 1.
    \end{cases} \] 
   
    As a corollary, we would then have that $ \frank H_n(C_\bullet^m)$ is \[ \sum_{k = 1}^m \binom{m - 1}{k - 1} \frank H_n(C_\bullet^{mf}) = \sum_{k = 1}^{n + 1} \binom{m - 1}{k - 1} = \sum_{k = 0}^n \binom{m - 1}{k} \] when $m \geq n + 1$ and \[ \sum_{k = 1}^m \binom{m - 1}{k - 1} = \sum_{k = 0}^{m - 1} \binom{m - 1}{k} = 2^{m - 1} \] when $m \leq n + 1$. 
\end{conjecture}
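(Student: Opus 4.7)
The case $m > n + 1$ is immediate: an $n$-tuple involves at most $n$ distinct letters, so the requirement in Definition~\ref{chain-module-def} that basis tuples of $C_n^{mf}$ use all $m - 1$ top letters $v_2, \ldots, v_m$ forces $C_n^{mf} = 0$, and hence $H_n(C_\bullet^{mf}) = 0$.

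For $m \leq n + 1$ the plan is to separately establish the lower bound $\frank H_n(C_\bullet^{mf}) \geq 1$ by exhibiting an explicit generator with a detecting chain map, and then to bound the free rank from above. The candidate generator is the weakly decreasing tuple
\[ \gamma_{m, n} := v_m \otimes v_{m-1} \otimes \cdots \otimes v_2 \otimes v_1^{\otimes (n - m + 1)}, \]
which specializes to $v_1^{\otimes n}$ when $m = 1$ and to the strictly decreasing permutation $(v_m, v_{m-1}, \ldots, v_2)$ when $m = n + 1$. Because every adjacent pair of entries is either strictly decreasing or both equal to $v_1$, every $R$-application encountered in the computational trees for $d_i^\ell \gamma_{m, n}$ and $d_i^r \gamma_{m, n}$ is a deterministic swap with coefficient $1$. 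A direct bubble-sort computation shows that for each $i \geq m$ both $d_i^\ell \gamma_{m, n}$ and $d_i^r \gamma_{m, n}$ equal $\gamma_{m, n-1}$, while for each $i \leq m - 1$ the result is the tuple obtained from $\gamma_{m, n}$ by deleting its $i$-th entry, which omits the letter $v_{m - i + 1}$ and therefore vanishes in the quotient $C_{n-1}^{mf}$. Thus $d_i^\ell \gamma_{m, n} - d_i^r \gamma_{m, n} = 0$ for every $i$, and $\partial_n \gamma_{m, n} = 0$.

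To certify that $[\gamma_{m, n}]$ has infinite order I would exploit the column-unital property of $R_{(m)}$. For each weight $\lambda: X_{(m)} \to k$, a case-check of the Boltzmann weights in Definition~\ref{def:YB-operator} shows that the multiplicative map $\epsilon_\lambda(v_{a_1} \otimes \cdots \otimes v_{a_n}) = \prod_i \lambda(a_i)$ is a chain map $C_\bullet^m \to (k, 0)$. Putting $\lambda_S(v_i) = 0$ for $v_i \in S$ and $\lambda_S(v_i) = 1$ otherwise, the alternating inclusion-exclusion sum
\[ \Phi := \sum_{S \subseteq \{v_2, \ldots, v_m\}} (-1)^{|S|} \epsilon_{\lambda_S} \]
is again a chain map, and a standard inclusion-exclusion simplification gives $\Phi(\tau) = 1$ when the basis tuple $\tau$ uses every letter in $\{v_2, \ldots, v_m\}$ and $\Phi(\tau) = 0$ otherwise. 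Consequently $\Phi$ descends through the successive quotients of Definition~\ref{chain-module-def} to a well-defined chain map $C_\bullet^{mf} \to (k, 0)$ that is identically $1$ on the basis of $C_n^{mf}$; in particular $\Phi(\gamma_{m, n}) = 1$. Since $\Phi$ annihilates boundaries, $[\gamma_{m, n}]$ is neither a boundary nor a torsion class, yielding $\frank H_n(C_\bullet^{mf}) \geq 1$.

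The upper bound $\frank \leq 1$ is the principal obstacle. My plan is to induct on $m$ via the (non-split) short exact sequence
\[ 0 \to C_\bullet^{m - 1, m - 1} \to C_\bullet^{m, m - 1} \to C_\bullet^{m, m} \to 0 \]
arising from Definition~\ref{chain-module-def} and its long exact sequence in homology tensored with the fraction field $\mathbb{Q}(y^2)$, where torsion disappears and free ranks become ordinary $\mathbb{Q}(y^2)$-dimensions that add alternatingly in exact sequences. Equivalently, by M\"obius inversion against Theorem~\ref{DirectSumResult}, it suffices to prove the closed formula $\frank H_n(C_\bullet^m) = \sum_{k=0}^{\min(n, m - 1)} \binom{m - 1}{k}$. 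I would attempt this via the Hecke relation $R^2 = (1 - y^2) R + y^2$ satisfied by $R_{(m)}$, together with the generic semisimplicity of the Hecke algebra over $\mathbb{Q}(y^2)$, which decomposes $V_{(m)}^{\otimes n}$ into Schur--Weyl isotypic components indexed by partitions. The main difficulty is that the wall-absorption portions of the face maps are not Hecke-equivariant, so the isotypic decomposition is not $\partial$-invariant; the bulk of the work would be to bookkeep the resulting correction terms, most likely by filtering the complex by the number of wall absorptions and analyzing the associated spectral sequence.
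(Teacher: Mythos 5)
The statement you are trying to prove is Conjecture~\ref{free-rank}, which the paper leaves open; there is no proof in the paper to compare against. Your case $m > n+1$ is correct (and already observed in the paper right after Theorem~\ref{DirectSumResult}): an $n$-tuple has at most $n$ distinct entries, so $C_n^{mf} = 0$. Your candidate $\gamma_{m,n}$ is genuinely a cycle: since the tuple is weakly decreasing, every $R$-crossing in the computational tree of each $d_i^\varepsilon$ is a deterministic swap with coefficient $1$, and a direct check shows $d_i^\ell \gamma_{m,n} = d_i^r \gamma_{m,n}$ for all $i$, hence $\partial_n \gamma_{m,n} = 0$ already in $C_\bullet^m$.

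The detection step, however, does not work: the maps $\epsilon_\lambda$ are not chain maps, and neither is the inclusion-exclusion combination $\Phi$. The column-unital property does give $\epsilon_\lambda \circ R = \epsilon_\lambda$ on $V \otimes V$ (since $R$ preserves the multiset with total coefficient $1$), but it gives nothing useful after the wall absorption: the dropped tensor factor varies across branches of the computational tree, so $\epsilon_\lambda d_i^\ell \neq \epsilon_\lambda d_i^r$ in general. Concretely, take $m=3$ and the basis tuple $(1,3,2) \in C_3^{3f}$. From the matrix for $\partial_3$ in Example~\ref{computationm3n3} (second column),
\[
\partial_3^{3f}(1,3,2) = (1-y^2)\bigl[(2,3) + y^2(3,2)\bigr] \in C_2^{3f},
\]
and both $(2,3)$ and $(3,2)$ use every top letter, so $\Phi\bigl(\partial_3^{3f}(1,3,2)\bigr) = (1-y^2)(1+y^2) = 1 - y^4 \neq 0$. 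Thus $\Phi$ does not kill boundaries and does not descend to a functional on $H_n(C_\bullet^{mf})$. (The only $\lambda$ for which $\epsilon_\lambda$ is actually a cocycle is essentially the constant one, and that constant functional cannot descend to the quotient complexes $C_\bullet^{mf}$ since it does not vanish on tuples omitting a top letter.) So the certificate that $[\gamma_{m,n}]$ is of infinite order is missing, and the lower bound $\frank H_n(C_\bullet^{mf}) \geq 1$ remains unproved. The upper bound you describe is, as you say, only a plan; both directions of the conjecture are still open.
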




    

\begin{conjecture}[$H_n(R_{(2)})$]
   We conjecture about the homology of $R_{(2)}$ as follows:
    \[ H_n(R_{(2)}) = k^2 \oplus \left( \frac{k}{(1 - y^2)} 
    \right)^{\oplus b_n(2)} \oplus \left( \frac{k}{(1 - y^4)} 
     \right)^{\oplus c_n(2)} \] where

    $c_n(2)=f_{n+1}-1$ and $f_n$ are  the Fibonacci numbers ($f_0 = 0$, $f_1 = 1$), and \[ b_n(2) = \frac{2^{n + 1} + (-1)^n}{3} - f_{n + 1}. \] 

    Notice that $H_n(R_{(2)})=k\oplus H_n(C_\bullet^{2f}).$
    
\end{conjecture}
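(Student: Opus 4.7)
The plan is to reduce the problem to computing $H_n(C_\bullet^{2f})$ and then establish a two-step recursion in $n$ that reproduces the conjectured Jacobsthal- and Fibonacci-type closed forms. By Theorem~\ref{DirectSumResult} with $m=2$ one has the chain-complex splitting $C_\bullet^{2} \cong C_\bullet^{1f} \oplus C_\bullet^{2f}$. The first summand is the single-letter complex $C_\bullet^{1}$, whose chain modules are all free of rank one and whose differentials all vanish, so $H_n(C_\bullet^{1f})=k$ for every $n$. This gives the identity $H_n(R_{(2)}) = k \oplus H_n(C_\bullet^{2f})$ asserted at the end of the conjecture, and reduces the problem to identifying the free and torsion parts of $H_n(C_\bullet^{2f})$.

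To produce the recursion, I would filter $C_\bullet^{2f}$ by the position of the first occurrence of the letter $v_2$ in a basis tuple: let $F_p C_n^{2f}$ be the submodule spanned by those tuples whose initial $v_1$-block has length at most $p-1$. That $(F_p C_\bullet^{2f})$ forms a filtration by subcomplexes rests on two observations: the Yang-Baxter operator fixes $v_1\otimes v_1$, so internal $R$-moves preserve a leading $v_1$-block along the computational tree of a face map; and each wall map removes only a single letter, so the first-$v_2$-position shifts by at most one. After stripping off the deterministic leading $v_1$-block, the associated graded piece $F_p/F_{p-1}$ consists of tuples of shape $(1,\dots,1,2,a_{p+1},\dots,a_n)$, and via the strictly order-preserving identifications of Section~\ref{sec:DS} should be identifiable with a shifted copy of $C_\bullet^{2f}$. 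Splitting these pieces further according to whether the letter immediately following the first $v_2$ equals $v_1$ or $v_2$ yields a natural two-term relation between $H_n(C_\bullet^{2f})$, $H_{n-1}(C_\bullet^{2f})$, and $H_{n-2}(C_\bullet^{2f})$---the combinatorial shape required to produce both the Jacobsthal identity $J_{n+1} = J_n + 2J_{n-1}$ (governing the total rank $1+b_n(2)+c_n(2) = J_{n+1}$) and the Fibonacci identity $f_{n+1}=f_n+f_{n-1}$ (governing the $(1-y^4)$-torsion count $c_n(2) = f_{n+1}-1$).

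With the recursion in hand, matching the conjectured closed forms is a finite check: the base cases are precisely the $C_\bullet^{2f}$ column of Table~\ref{initialconditiondata} for $1\leq n\leq 4$, and the inductive step reduces to verifying $c_n(2) = c_{n-1}(2) + c_{n-2}(2) + 1$ (equivalent to $f_{n+1}=f_n+f_{n-1}$) together with the compatible Jacobsthal relation on the total torsion rank $b_n(2)+c_n(2) = J_{n+1}-1$.

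The main obstacle will be separating the $(1-y^2)$-torsion from the $(1-y^4)$-torsion out of the filtration data. Because $\mathbb{Z}[y^2]$ is not a principal ideal domain, extension problems for torsion modules are genuinely subtle, and the higher differentials of the filtration spectral sequence could in principle mix the two torsion types. I expect that resolving this will require either constructing explicit chain-homotopy equivalences realizing the splittings $F_p/F_{p-1}\to C_\bullet^{2f}$ on the nose (so the spectral sequence degenerates at $E^1$), or tracking the two torsion classes through the eigenspace decomposition of $R_{(2)}$ on $\operatorname{span}(v_1\otimes v_2,\ v_2\otimes v_1)$: the eigenvalues $1$ and $-y^2$, together with the factorization $1-y^4 = (1-y^2)(1+y^2)$, should identify the algebraic source of each torsion type.
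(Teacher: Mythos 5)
This statement is presented in the paper as a \emph{conjecture}: the paper offers no proof, only the computational evidence of Table~\ref{initialconditiondata} and the degree-$5$ data, so your argument would have to stand entirely on its own. The only part of your proposal that is actually established is the opening reduction: Theorem~\ref{DirectSumResult} with $m=2$ gives $C_\bullet^{2}\cong C_\bullet^{1f}\oplus C_\bullet^{2f}$, the one-letter complex has vanishing differentials (all face maps coincide there), so $H_n(R_{(2)})\cong k\oplus H_n(C_\bullet^{2f})$ --- which is exactly the remark already contained in the conjecture's statement. Everything after that is a plan rather than a proof, and the plan has genuine gaps.

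Concretely: (a) the associated graded pieces of your filtration by the position of the first $v_2$ consist of tuples $(1,\dots,1,2,a_{p+1},\dots,a_n)$ with arbitrary tail, so they are shifted copies of the \emph{full} two-letter complex (of rank $2^{n-p}$), not of $C_\bullet^{2f}$; the claimed identification ``with a shifted copy of $C_\bullet^{2f}$'' is not verified, and neither is the compatibility of the induced differentials with those identifications. (b) Even granting the filtration, a clean direct-sum recursion of the shape you describe is arithmetically impossible: the conjectured free rank of $H_n(C_\bullet^{2f})$ is $1$ for every $n$, whereas any splitting of $C_\bullet^{2f}$ into one copy shifted by $1$ and (in effect) two copies shifted by $2$ would force the free rank to satisfy $a_n=a_{n-1}+2a_{n-2}$. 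So nontrivial connecting maps or extensions must occur, and over $k=\mathbb{Z}[y^2]$ (not a PID) these are exactly what decide how the $J_{n+1}-1$ torsion summands distribute into $b_n(2)$ copies of $k/(1-y^2)$ and $c_n(2)=f_{n+1}-1$ copies of $k/(1-y^4)$ --- which is the entire content of the conjecture. Your final paragraph acknowledges this obstacle but supplies no mechanism to resolve it (the eigenvalue remark about $1$ and $-y^2$ is a heuristic, not an argument), and the reduction of the closed forms to ``a finite check plus a recursion'' presupposes precisely the recursion that has not been established. As it stands, the statement remains unproven both in the paper and in your proposal.
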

\ \\ \ \\

Recall that by Theorem \ref{DirectSumResult},  \begin{align*}
    H_5(C_\bullet^m) & = H_5(C_\bullet^{1,0}) \oplus (m - 1)H_5(C_\bullet^{2,1}) \oplus \binom{m - 1}{2} H_5(C_\bullet^{3,2}) \oplus \\ & \binom{m - 1}{3} H_5(C_\bullet^{4,3}) \oplus \binom{m - 1}{4} H_5(C_\bullet^{5,4}) \oplus \binom{m - 1}{5} H_5(C_\bullet^{6,5}).
\end{align*}   Accordingly, we compute (over $\Z$ with $y = 2$): \begin{align*}
    H_5(C_\bullet^{1,0}) & = \Z \\
    H_5(C_\bullet^{2,1}) & = \Z \oplus 13 \Z_3 \oplus 7 \Z_{15} \\
    H_5(C_\bullet^{3,2}) &= \Z \oplus 124 \Z_3 \oplus 16 \Z_{15} \\
    H_5(C_\bullet^{4,3}) &= \Z \oplus 323 \Z_3 \oplus 12 \Z_{15} \\
    H_5(C_\bullet^{5,4}) &= \Z \oplus 332 \Z_3 \oplus 4 \Z_{15} \\
    H_5(C_\bullet^{6,5}) &= \Z \oplus 119 \Z_3.
\end{align*}

These leads to the conjecture (for $k=Z[y^2]$):
\begin{conjecture}[Conjecture for $H_5(R_{(m)}$]
The fifth Yang-Baxter homology leading to the HOMFLYPT polynomial is 
        \begin{align*} 
        H_5(R\m) &\cong k^{\oplus \left( \binom{m - 1}{0} + \binom{m - 1}{1} + \binom{m - 1}{2} + \binom{m - 1}{3} + \binom{m - 1}{4} + \binom{m - 1}{5} \right) } \\
        &\oplus \left( \frac{k}{(1 - y^2)} \right)^{\oplus \left( 13 \binom{m - 1}{1} + 124\binom{m - 1}{2} + 323\binom{m - 1}{3} + 332\binom{m - 1}{4} + 119\binom{m - 1}{5} \right)} \\
        &\oplus \left( \frac{k}{(1 - y^4)} \right)^{\oplus \left(7\binom{m - 1}{1} + 16\binom{m - 1}{2} + 12\binom{m - 1}{3} + 4\binom{m - 1}{4} \right)} \\
        &= \medmath{ \bigg ( \frac{m\left(m^{4}-10m^{3}+55m^{2}-110m+184\right)}{120}, \frac{\left(m^{2}-1\right)\left(119m^{3}-125m^{2}+94m-120\right)}{120},} \\
        &\qquad \qquad \qquad \qquad \medmath{ \frac{\left(m-1\right)\left(m^{3}+3m^{2}+14m-6\right)}{6} \bigg ) } 
   \end{align*}
\end{conjecture}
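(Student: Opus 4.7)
The plan is to apply Theorem~\ref{DirectSumResult}, which reduces the computation of $H_5(C_\bullet^m) = H_5(R\m)$ to six initial conditions:
\[
H_5(C_\bullet^m) \cong \bigoplus_{j=1}^{6} \binom{m-1}{j-1}\, H_5(C_\bullet^{jf}),
\]
where the upper summation limit is $6$ since $C_5^{jf} = 0$ whenever $j > 6$ (a $5$-tuple cannot contain $j - 1 > 5$ required top letters). Given the six values $H_5(C_\bullet^{jf})$ claimed by the conjecture, the polynomial closed form in $m$ follows by summing the contributions with the binomial coefficients and simplifying via standard Pascal-triangle manipulations; this final algebraic step will be routine once the initial conditions are established.

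For each $1 \leq j \leq 6$, I would mirror the method of Example~\ref{computationm3n3}: write down the matrices for $\partial_6 : C_6^{jf} \to C_5^{jf}$ and $\partial_5 : C_5^{jf} \to C_4^{jf}$ in the ordered tuple basis over $k = \mathbb{Z}[y^2]$, produce an explicit diagonalization $D_6 = P_6 \partial_6 Q_6$ using a computer algebra system, read off the torsion part of $H_5(C_\bullet^{jf})$ from the diagonal, and finally compute $\frank H_5(C_\bullet^{jf}) = \rank C_5^{jf} - \rank \partial_6 - \rank \partial_5$. The expected diagonals consist only of units, copies of $1 - y^2$, and copies of $1 - y^4$, with multiplicities $(1, 0, 0)$, $(1, 13, 7)$, $(1, 124, 16)$, $(1, 323, 12)$, $(1, 332, 4)$, and $(1, 119, 0)$ respectively. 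The specialization data at $y = 2$ already tabulated in the paper (giving $\mathbb{Z} \oplus \mathbb{Z}_3^{a} \oplus \mathbb{Z}_{15}^{b}$ torsion with precisely these multiplicities) is strong evidence that no further invariant factors appear, but by itself does not rule out, e.g., mixed factors like $k/(1-y^2)^2$, so the symbolic computation is necessary.

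The hard part will be carrying out the symbolic diagonalization at $j = 6$, where $\rank C_5^{6f} = \tilde{S}(5,6,5) = 120$ and $\rank C_6^{6f} = \tilde{S}(6,6,5) = 2520$, making $\partial_6$ a $120 \times 2520$ matrix over $\mathbb{Z}[y^2]$. Since $\mathbb{Z}[y^2]$ is not a PID, Smith normal form is not guaranteed; as in the $H_3$ and $H_4$ proofs, the practical workaround will be to diagonalize first over $\mathbb{Q}[y^2]$ in Mathematica, then certify that the resulting change-of-basis matrices $P$ and $Q$ are defined over $\mathbb{Z}[y^2]$ with unit determinant. To make this tractable, I would try to block-decompose the boundary matrices using the symmetries developed in Section~\ref{sec:DS}: the duality involution $\tau_m$ of Proposition~\ref{tau} and the chain endomorphisms induced by weakly order-preserving maps should split each $C_\bullet^{jf}$ into smaller invariant summands whose Smith forms can be computed independently and then reassembled, bringing the largest case within the limits of symbolic computation.
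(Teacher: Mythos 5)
The statement you are addressing is a \emph{conjecture} in the paper, and the authors give no proof of it; the only supporting evidence is the list of $H_5(C_\bullet^{jf})$ for $1 \leq j \leq 6$ computed over $\mathbb{Z}$ after the integer specialization $y = 2$, yielding $\mathbb{Z} \oplus \mathbb{Z}_3^{a_j} \oplus \mathbb{Z}_{15}^{b_j}$ with exactly the multiplicities quoted. Your outline therefore does not mirror the paper's argument --- the paper has none --- but rather describes how one would upgrade the conjecture to a theorem: apply Theorem~\ref{DirectSumResult} to reduce to the six initial conditions $H_5(C_\bullet^{jf})$, $1 \leq j \leq 6$, and carry out Smith-form diagonalizations of $\partial_5$ and $\partial_6$ symbolically over $\mathbb{Z}[y^2]$, certifying that the $P$ and $Q$ matrices live in $\mathrm{GL}(\mathbb{Z}[y^2])$. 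This is precisely what the paper does for $H_3$ and $H_4$ and is the right program. You are also correct that a single integer specialization does not constitute a proof, and your ranks ($\rank C_5^{6f} = 120$, $\rank C_6^{6f} = 2520$) check out against the $\tilde S(n,m,m-1)$ table.

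Two calibrations are worth noting. First, your illustrative ambiguity is off: $k/(1-y^2)^2$ specializes at $y=2$ to $\mathbb{Z}_9$, not $\mathbb{Z}_3$, so the $y=2$ data \emph{would} detect it. The real ambiguity is between, say, $k/(1-y^2)$ and $k/(p(y^2))$ for any other $p$ with $p(4) = \pm 3$ (e.g.\ $k/(7-y^2)$), or between factors like $k/(1-y^2)(1+y^2)(1+y^2+y^4)$ and $k/(1-y^4)\oplus k/(1-y^2)\cdot\text{(something)}$ that agree at a single point --- that is why the symbolic computation is genuinely necessary. Second, your proposed optimization via the duality involution $\tau_m$ of Proposition~\ref{tau} does not directly apply to $C_\bullet^{mf} = C_\bullet^{m,m-1,1}$: since $\tau_m$ reverses the ordering of letters, it carries ``use $v_2,\dots,v_m$ at least once'' to ``use $v_1,\dots,v_{m-1}$ at least once,'' which is a different subquotient; $\tau_m$ is therefore an isomorphism from $C_\bullet^{mf}$ onto another complex, not an endomorphism, and hence does not furnish eigenspaces of $C_\bullet^{mf}$. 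Likewise the weakly order-preserving chain maps of Proposition~\ref{prop:morph-chain} do not in general respect the required/optional letter structure that defines $C_\bullet^{mf}$. Some symmetry reduction may still be available, but it must be constructed more carefully than the section 2 machinery gives for free.
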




One possible approach to these computational conjectures is through filtration of the chain modules $C_n^{m}$ as in this paper. We present some possible tools for finding further filtrations in the following subsections: in \ref{Kunneth}, we present a K\"unneth-type formula, and in \ref{specdecomp} we present some conjectured splittings of chain complexes evidenced by experimental data.


\subsection{K\"unneth subchain complex of $(C_n^{m},\partial_n)$}\label{Kunneth}
There is an intriguing subchain complex of $(C_n^{m},\partial_n)$ which is isomorphic to the tensor product of smaller chain complexes (this allows us to use a K\"unneth formula, as long as we work in a PID; say $\Q[y^2]$-ring). We conjecture that this subcomplex splits from $(C_n^{m},\partial_n)$. 
This subcomplex, denoted by $(C_n^{(m,B,A)},\partial_n)$ is constructed as follows:
\begin{definition}\label{KunethDef}
Fix a decomposition of letters $X_{(m)}=A\cup B$ 
and consider a submodule $C_n^{(m,A,B)}$ of $C_n^{m}$ 
generated by sequences in $X^n_{(m)}$ which start with letters from $A$ and end with letters in $B$. More formally:
$$X_{(n,m,A,B)} =\{(a_1,a_2,...,a_n)\ | \ \mbox{ there is $i$ such that } a_1,...,a_i \in A; a_{i+1},...,a_n \in B \}.$$
\end{definition}
\begin{proposition}
If $A \geq B$; that is if $v_i\in A$ and $v_j\in B$ then 
 $i\geq j$ then 
 \begin{enumerate} \item[(1)]
 $(C_n^{(m,A,B)},\partial_n)$ is a subchain complex of $(C_n^{m},\partial_n)$. 
  \item[(2)]
 If $(a_1,a_2,...,a_n)=(u,w)$ where $u\in A^{|u|}$, $w\in B^{|w|}$ and $|x| $ denotes the length of the word $x$ then
 $$\partial_n(u,w)= (\partial_{|u|} u,w) + (-1)^{|u|}(u,\partial_{|w|}w).$$.
 \item[(3)] If $A>B$ then $C_{\bullet}^{(m,A,B)} \equiv C_{\bullet}^{|A|}\otimes C_{\bullet}^{|B|},$
 that is $C_n^{(m,A,B)}= \sum_{i+j=n}C_i^{|A|} \otimes C_j^{|B|}$.
 \end{enumerate}
\end{proposition}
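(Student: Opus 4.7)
The key observation I would rely on is that when $A \geq B$, for any $a \in A$ and $b \in B$ we have $a \geq b$, so Definition~\ref{def:YB-operator} gives $R_{(m)}(v_a \otimes v_b) = v_b \otimes v_a$---a trivial swap with weight $1$. Consequently, a $B$-letter can slide leftward past any $A$-letter and an $A$-letter can slide rightward past any $B$-letter without picking up Boltzmann weights, provided the $A$-letter sits to the left of the $B$-letter prior to the swap.

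My plan is to prove (2) first---the Leibniz-type formula $\partial_n(u,w) = (\partial_{|u|} u, w) + (-1)^{|u|}(u, \partial_{|w|} w)$---from which (1) and (3) will follow immediately. For (2), I would analyze each face map $d_i^{\epsilon}$ acting on $(u,w)$ with $|u|=p$, $|w|=q$, splitting into the cases $i \leq p$ and $i > p$. For $d_i^{\ell}$ with $i \leq p$, all $R$-applications take place within the $A$-block, giving $d_i^{\ell}(u,w)=(d_i^{\ell} u, w)$. For $d_i^{\ell}$ with $i > p$, the first $i - p - 1$ $R$-applications mirror the braidings in $d_{i-p}^{\ell} w$ inside the $B$-block; once the targeted $B$-letter reaches position $p+1$, every subsequent $R$-application crosses the boundary between $A$ and $B$ and reduces, by the key observation, to a trivial swap. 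The $B$-letter then glides through the entire $A$-block and is absorbed at the left wall, leaving the $A$-portion intact, so $d_i^{\ell}(u,w) = (u, d_{i-p}^{\ell} w)$. The analogous statement for $d_i^r$ follows symmetrically, with $A$-letters sliding rightward through $B$-letters. Summing $\sum_{i=1}^{p+q}(-1)^i (d_i^{\ell} - d_i^r)(u,w)$, splitting at $i = p$ and reindexing $j = i - p$ (using $(-1)^{j+p} = (-1)^p(-1)^j$) will then produce the Leibniz formula.

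Part (1) will be an immediate consequence of (2): each term on the right-hand side is a linear combination of tuples that still begin in $A$ and end in $B$, since the $R$-operator preserves the multiset of letters appearing in each tensor factor. For (3), the strict hypothesis $A > B$ ensures $A \cap B = \emptyset$, so every element of $X_{(n,m,A,B)}$ decomposes uniquely as a concatenation $(u,w)$ with $u \in A^p$, $w \in B^q$, and $p+q=n$. This yields the module-level isomorphism $C_n^{(m,A,B)} \cong \bigoplus_{p+q=n} C_p^{|A|} \otimes C_q^{|B|}$ (after identifying $A$ and $B$ with $X_{(|A|)}$ and $X_{(|B|)}$ via their unique order-preserving bijections), and the Leibniz formula from (2) matches exactly the differential on the tensor product of chain complexes, upgrading this to an isomorphism of chain complexes.

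The main obstacle will be carefully bookkeeping the computational tree for $d_i^{\ell}$ when $i > p$ (and symmetrically $d_i^r$ when $i \leq p$), verifying that the trivial-swap property across the boundary really does leave the other block undisturbed. In particular, I would need to confirm that after the sequence of boundary-crossing swaps the original letters reappear in their original positions, with the net effect being solely the deletion of the selected letter. Once that bookkeeping is complete, the remaining algebraic manipulations are routine.
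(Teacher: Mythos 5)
Your proof is correct and follows the same underlying idea as the paper's (very terse) argument: exploit the fact that the normalized $R$ acts as the trivial swap $R(v_a\otimes v_b)=v_b\otimes v_a$ whenever $a\geq b$, which by the hypothesis $A\geq B$ is exactly what happens at every crossing of the $A$/$B$ boundary. The paper compresses this into a one-line citation of a fact about coincidence of adjacent face maps (stated with an apparent typo), whereas you unwind it into the explicit splitting $d_i^{\ell}(u,w)=(d_i^{\ell}u,w)$ for $i\leq |u|$ and $d_i^{\ell}(u,w)=(u,d_{i-|u|}^{\ell}w)$ for $i>|u|$ (and the symmetric statement for $d_i^{r}$), with the selected letter gliding across the boundary via trivial swaps and leaving the other block untouched; the sign $(-1)^{|u|}$ then comes from the reindexing $j=i-|u|$. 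Your deductions of (1) and (3) from (2) also match the paper's structure. One small point worth recording explicitly when you write this up: in (3), the identification $C_\bullet^{(m,A,B)}\cong C_\bullet^{|A|}\otimes C_\bullet^{|B|}$ relies on the unique order-preserving bijections $A\to X_{(|A|)}$, $B\to X_{(|B|)}$ inducing chain isomorphisms, which is precisely the content of Proposition~\ref{Shift} and its corollary; and the strictness $A>B$ is what guarantees $A\cap B=\emptyset$, so the decomposition $(u,w)$ of a basis tuple is unique and the direct sum $\bigoplus_{i+j=n}C_i^{|A|}\otimes C_j^{|B|}$ has no overcounting.
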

\begin{proof} (1) and (3) follow from (2). To show (2) we 
use the fact than if in $(a_1,...,a_n)$  we have $a_i\geq a_j$ for some $j$ then $d_j^{\ell}=d_{j+1}^{\ell}$ and 
$d_j^{r}=d_{j+1}^{r}$.
\end{proof}
\begin{conjecture} The short exact sequence of chain complexes
$$0 \to C_{\bullet}^{(m,A,B)} \to C_\bullet^{m} \to C_n^{m}/C_{\bullet}^{(m,A,B)} \to 0,$$
splits. Thus homology $H_n(C_\bullet^{m})$ splits.
\end{conjecture}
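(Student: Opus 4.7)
My approach is to exhibit a chain-map retraction $r \colon C_\bullet^m \to C_\bullet^{(m,A,B)}$ with $r \circ \iota = \mathrm{id}$, where $\iota$ denotes the inclusion of the subcomplex; any such $r$ splits the short exact sequence of chain complexes, and the claimed splitting of $H_n(C_\bullet^m)$ follows at once. Since each $C_n^m$ is free, it decomposes at the module level as $C_n^m = C_n^{(m,A,B)} \oplus U_n$, where $U_n$ is the free span of the \emph{unsorted} tuples (those containing a letter of $B$ to the left of a letter of $A$), so $r_n$ is determined by its restriction $c_n \colon U_n \to C_n^{(m,A,B)}$. The chain-map condition reduces, on each unsorted basis tuple $\gamma$, to the recursion
\[
\partial\, c_n(\gamma) \;=\; (\partial\gamma)_s \;+\; c_{n-1}\bigl((\partial\gamma)_u\bigr),
\]
where $(\partial\gamma)_s$ and $(\partial\gamma)_u$ denote the sorted and unsorted parts of $\partial\gamma$, respectively.

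I would build $c_\bullet$ by induction on chain degree: at each step one must choose an element of $C_n^{(m,A,B)}$ whose boundary equals the prescribed right-hand side. Two natural tools present themselves. First, the K\"unneth identification $C_\bullet^{(m,A,B)} \cong C_\bullet^{|A|} \otimes C_\bullet^{|B|}$ from the preceding proposition reduces the internal structure of the subcomplex to the simpler building blocks $C_\bullet^{|A|}$ and $C_\bullet^{|B|}$, whose low-degree homology has already been computed in Section~\ref{H3computation}; combined with the Yang-Baxter equation and the column-unital condition, this controls the boundaries that must be realized. Second, the Hecke algebra of type $A_{n-1}$ acts on $V^{\otimes n}$ through the Yang-Baxter operator, and over the fraction field $\mathbb{Q}(y^2)$ it contains idempotents projecting onto the ``sorted'' subspace spanned by tuples of the form $A^p B^q$; such a projection, if it commutes with the face maps, would supply a natural rational candidate for $r$.

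The principal obstacle is the solvability of the recursion itself: at every stage the prescribed right-hand side must actually lie in the image of $\partial$ restricted to $C_\bullet^{(m,A,B)}$. This is equivalent to demanding that $\iota_* \colon H_n(C_\bullet^{(m,A,B)}) \to H_n(C_\bullet^m)$ be injective in every degree, or that the connecting homomorphism in the long exact sequence vanish throughout. Such injectivity is delicate because the base ring $k = \mathbb{Z}[y^2]$ is not a PID -- the usual K\"unneth- and $\mathrm{Ext}$-based splitting arguments therefore do not apply directly -- and because the low-degree computations in Section~\ref{H3computation} exhibit fine torsion of the forms $k/(1-y^2)$ and $k/(1-y^4)$ whose compatibility with any rationally produced splitting must be verified. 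I would therefore first test the recursion against explicit small cases, e.g.\ $m = 2, 3$ with $|A| = 1$, drawing on the explicit ranks and torsion in Table~\ref{initialconditiondata} both to confirm or refute injectivity of $\iota_*$ and, if necessary, to refine the hypotheses on $A$ and $B$ under which the splitting is expected to hold.
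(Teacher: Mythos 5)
This statement is an unproved \emph{conjecture} in the paper; the authors offer no argument for it, only numerical evidence. Your submission is accordingly not a proof but a research plan, and it is honest about that. Given the circumstances, this is the right posture: you correctly reduce the problem to exhibiting a chain-level retraction $r$, correctly observe that at the module level $C_n^m$ splits as the sorted tuples plus the unsorted ones so that $r$ is determined by a map $c_n$ on the unsorted span, and correctly derive the recursion $\partial c_n(\gamma) = (\partial\gamma)_s + c_{n-1}\bigl((\partial\gamma)_u\bigr)$ whose solvability is the whole question. Your identification of the K\"unneth identification $C_\bullet^{(m,A,B)} \cong C_\bullet^{|A|} \otimes C_\bullet^{|B|}$ as the relevant tool, and your caution that $k=\mathbb{Z}[y^2]$ is not a PID so that rational idempotent projections and $\operatorname{Ext}$-based splitting arguments cannot be applied blindly, both reflect genuine understanding of where the difficulties lie.

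One small overstatement to flag: you assert that solvability of the recursion is \emph{equivalent to} $\iota_*$ being injective in every degree. Injectivity of $\iota_*$ (equivalently, vanishing of the connecting homomorphism) is necessary for a chain-level splitting, but it is not sufficient: a short exact sequence of homology groups can be exact without being split, and a homology-level split is still weaker than a chain-level retraction over a non-PID. The obstruction to solving the recursion at stage $n$ lives, after the lower-degree choices are fixed, in $H_{n-1}(C_\bullet^{(m,A,B)})$, but whether it vanishes can depend on those choices, so the precise relationship to $\iota_*$ is more delicate than a flat ``equivalent to.'' Your concluding suggestion to test small cases (e.g.\ against Table~\ref{initialconditiondata}) and, if necessary, to restrict the hypotheses on $A$ and $B$ is exactly what one should do: the paper itself leaves the statement at the level of a conjecture, and nothing in your plan (or in the paper) yet establishes it.
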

The special cases of the conjecture, when $A=\{v_m\}$ and 
$B=X_{(m)}$ or $B=X_{(m-1)}$  are also of interest:
\begin{conjecture} 
\begin{enumerate}
\item[(i)] The subcomplex $v_mC_{n-1}^{(m)}$ of $C_n^{m}$ splits $C_n^{m}$.
\item[(ii)] The subcomplex $v_mC_{n-1}^{(m-1)}$ of $C_n^{m}$ splits $C_n^{m}$.
\end{enumerate}
\end{conjecture}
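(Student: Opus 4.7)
The plan is to establish splitting for both (i) and (ii) by the same strategy: first verify the subcomplex property, then identify each subcomplex as isomorphic (up to degree shift and sign) to $C_\bullet^m$ or $C_\bullet^{m - 1}$ via the embedding $\Phi(w) = (v_m, w)$, and finally construct an explicit chain retraction. For the subcomplex property, we rely on the preceding K\"unneth-type proposition with $A = \{v_m\}$. Concretely, since $v_m$ is the top letter, $R(v_m, x) = (x, v_m)$ is a pure swap for every $x \in X_{(m)}$, which forces $(d_1^l - d_1^r)(v_m, w) = 0$ and allows the face maps with $i \geq 2$ to preserve the condition that the first letter is $v_m$. For case (ii) one additionally observes that $R$ applied within $w$ preserves the letter set $X_{(m - 1)}$, since $R^{ab}_{cd}$ is nonzero only when $\{c, d\} = \{a, b\}$.

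The key structural observation is that $\Phi(w) = (v_m, w)$ is a chain isomorphism up to sign: in case (i) from $C_{\bullet - 1}^m$ onto $v_m C_{\bullet - 1}^{(m)}$, and in case (ii) from $C_{\bullet - 1}^{m - 1}$ onto $v_m C_{\bullet - 1}^{(m - 1)}$. Indeed the K\"unneth identity gives $\partial_n(v_m, w) = -(v_m, \partial_{n - 1} w)$ directly. Writing $K_\bullet$ for the subcomplex in question, splitting the short exact sequence $0 \to K_\bullet \to C_\bullet^m \to C_\bullet^m / K_\bullet \to 0$ is therefore equivalent to constructing a chain retraction $r: C_\bullet^m \to K_\bullet$ with $r|_{K_\bullet} = \id$. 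I would try to build $r$ by induction on $m$, using the decomposition $C_\bullet^m \cong C_\bullet^{m - 1} \oplus C_\bullet^{m, 1}$ from Proposition~\ref{prop:splitting} and transporting retractions across the splitting via the weakly order-preserving map machinery of Section~\ref{sec:DS}.

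The main obstacle will be constructing $r$ as a genuine chain map. A direct computation in case (ii) with $m = 3$ shows that $\partial_2(v_1, v_3) = (1 - 2y^2)(v_3 - v_1)$ contains a $v_3$-starting term, so the naive basis-level projection (identity on $v_m$-starting tuples, zero elsewhere) fails to commute with $\partial$: it sends this boundary to a nonzero element of $K_{n - 1}$ while sending $(v_1, v_3)$ itself to zero. Any correct retraction must therefore assign nonzero values to certain non-$v_m$-starting tuples in order to cancel such cross terms, and finding the right formula---perhaps by systematically extracting a ``leading $v_m$'' through iterated $R$-conjugations or by exploiting column-unitality---is the technically demanding part. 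A reasonable fallback, should a chain-level splitting prove elusive, is to establish only the homological splitting by showing that the connecting homomorphism $H_n(C_\bullet^m / K_\bullet) \to H_{n - 1}(K_\bullet)$ in the associated long exact sequence vanishes.
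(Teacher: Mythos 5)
The statement you are addressing is a \emph{conjecture}, stated without proof in Section 5 of the paper (``Future work, speculations, and conjectures''). The authors present it as an open special case of the preceding more general conjecture about the K\"unneth subcomplex $(C_\bullet^{(m,A,B)})$ when $A=\{v_m\}$; there is no proof in the paper to compare your proposal against. Your proposal is honest about this: you set up the subcomplex, identify the shift isomorphism $\Phi(w)=(v_m,w)$, note that the naive basis-level projection fails to be a chain map, and stop at a fallback strategy. Thus you have not closed the gap either, which is consistent with the paper's framing.

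One concrete error is worth flagging, because it is the only computation you actually carry out. You claim that in case (ii) with $m=3$, $\partial_2(v_1,v_3)=(1-2y^2)(v_3-v_1)$, and use this to argue that the naive projection already fails in degree $2$. This is incorrect: for any $2$-tuple $(a,b)$ in $C_2^m$ one has $d_1^\ell(a,b)-d_1^r(a,b)=d_2^\ell(a,b)-d_2^r(a,b)$ (when $a<b$ both differences equal $(1-y^2)(b-a)$; when $a\geq b$ both vanish), so
\[
\partial_2 = -(d_1^\ell-d_1^r)+(d_2^\ell-d_2^r) \equiv 0 \qquad\text{on all of } C_2^m.
\]
This is easy to cross-check against the reported $H_2(C_\bullet^{2,1})\cong k\oplus k/(1-y^2)\oplus k/(1-y^4)$, which forces $\ker\partial_2$ to be all of the rank-$3$ module $C_2^{2,1}$. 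The obstruction you are probing therefore cannot be seen below degree $3$; the example in Section~\ref{sec:DS}, $-\partial_3(a,c,b)=(1-y^2)\big((b,c)-(a,c)+y^2((c,b)-(c,a))\big)$, is a correct starting point for finding a genuine cross term that the naive projection fails to respect.

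Finally, a structural caution about the strategy itself: the splitting machinery of Section~\ref{sec:DS} and Proposition~\ref{prop:splitting} rests on \emph{strictly} order-preserving inclusions and their weakly order-preserving left inverses. The inclusion $\Phi$ here is not induced by a map of alphabets at all---it prepends a fixed letter---so Corollary~\ref{cor:split-mono} does not apply, and ``transporting retractions across the splitting'' will require a new idea rather than a reuse of that machinery. This is precisely why the authors left the claim as a conjecture.
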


\subsection{Additional torsion in $H_n(R_{(m)})$.} \label{specdecomp}


\vspace{1em}

Initially it was tempting to guess that for every $n$, $H_n(R_{(m)})$ decomposes into cyclic parts
$$H_n(R_{(m)}) = k^{a_n(m)} \oplus \bigg(k/(1-y^2)\bigg)^{b_n(m)} \oplus \bigg(k/(1-y^4)\bigg)^{c_n(m)}.$$
where $a_n(m)$ is described in Conjecture \ref{free-rank} and $b_n(m)$ and $c_n(m)$ are polynomials in variable $m$ of degree $n$ and $n-1$ respectively. However, the computation of $H_6(C^{3f})$ over $\mathbb Z$ ($y=2$) suggests another torsion:
$k/(1-y^2)(1+y^2)(1+y^2+y^4)$.  This suggests the following conjecture.

\begin{conjecture} \begin{enumerate} \item[(1)]
Over the ring $k=\mathbb Z$ ($y=2$) we have
$$H_6(R_{(m)},\mathbb Z)= \Z^{a_6(m)}\oplus \Z_{3}^{b_6(m)} \oplus \Z_{15}^{c_6(m)} \oplus \Z_{315}^{d_6(m)}, $$
with $a_6(m)$ as described in Conjecture \ref{free-rank}.

\item[(2)] For the ring $k=\Z[y^2]$ we have:
$$H_6(R_{(m)})= k^{a_6(m)} \oplus \bigg(k/(1-y^2)\bigg)^{b_n(m)} \oplus \bigg(k/(1-y^4)\bigg)^{c_6(m)}\oplus \bigg(k/(1-y^2)(1+y^2)(1+y^2+y^4)\bigg)^{d_6(m)}. $$
\end{enumerate}
\end{conjecture}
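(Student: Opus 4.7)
My plan is to extend the computational template used successfully for $H_3$, $H_4$, and $H_5$. By Theorem~\ref{DirectSumResult}, together with the vanishing $H_6(C_\bullet^{jf}) = 0$ whenever $j > 7$ (since $C_6^{jf}$ itself vanishes: a $6$-tuple cannot use more than $6$ distinct letters), the group $H_6(R_{(m)})$ decomposes as
$$
H_6(R_{(m)}) \cong \bigoplus_{j=1}^{7} \binom{m-1}{j-1}\, H_6(C_\bullet^{jf}).
$$
Hence the conjecture reduces to determining the isomorphism type of the seven individual groups $H_6(C_\bullet^{jf})$ for $j = 1,\dots,7$. Once these are in hand, the polynomials $a_6(m), b_6(m), c_6(m), d_6(m)$ are read off by summing with the binomial weights. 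Since at $y=2$ one has $(1-y^2)(1+y^2)(1+y^2+y^4) = (-3)(5)(21) = -315$, part~(2) of the conjecture implies part~(1) by specialization, so I would aim directly at part~(2).

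The first step is to confirm the $\Z$-version computationally as a sanity check and as a guide to which polynomial factors to expect. For each $j \in \{1,\dots,7\}$ I would then assemble matrix representations of $\partial_7 : C_7^{jf} \to C_6^{jf}$ and $\partial_6 : C_6^{jf} \to C_5^{jf}$ over $\Z[y^2]$, with the ranks of the relevant chain modules controlled by the generalized Stirling-type table at the end of Section~4. Following the strategy used in Example~\ref{computationm3n3}, I would diagonalize over $\mathbb{Q}[y^2]$ and then verify that the corresponding row and column operation matrices are integral over $\Z[y^2]$, so that the Smith-normal-form interpretation goes through despite the non-PID setting. The torsion summands of $H_6(C_\bullet^{jf})$ are then read from the diagonal entries of $\partial_7$, and the free rank from a rank computation of $\partial_6$, exactly as in the $H_3$ and $H_4$ computations.

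The primary obstacle is computational scale: the rank of $C_7^{7f}$ grows rapidly, and the associated boundary matrix is likely beyond what direct diagonalization in Mathematica can handle. A second, more conceptual difficulty is to show that the new elementary divisor $(1-y^2)(1+y^2)(1+y^2+y^4) = 1 + y^2 - y^6 - y^8$ is genuinely indecomposable as a torsion generator in $\Z[y^2]$ rather than an artifact of a particular numerical substitution, and that it appears uniformly for all $j \geq 3$ in a predictable multiplicity pattern $d_6(m)$. A natural way to attack both issues simultaneously is to refine the filtration of Section~3, using for instance the K\"unneth-type subcomplex of Subsection~\ref{Kunneth} or the speculative splittings discussed in Subsection~\ref{specdecomp}, so as to isolate the block of $\partial_7$ responsible for the new factor. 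If such a refinement can be made rigorous, one could replace a single enormous diagonalization by several much smaller ones, and simultaneously establish the stability of the new factor across $j$ by a uniform structural argument rather than a case-by-case numerical one.
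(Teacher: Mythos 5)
This statement is a \emph{conjecture} in the paper; the authors do not supply a proof, only supporting evidence, namely the computation of $H_6(C_\bullet^{3f})$ over $\Z$ with the substitution $y=2$ revealing a $\Z_{315}$ summand, and the $y=2$ and $y=3$ data in Table~4 yielding the partial bound $d_6(m)\geq m(m-1)$. Your proposal is therefore not competing with a proof in the paper but rather sketching the natural programme for establishing it; that programme is correct and identical in spirit to Sections~3--4 of the paper: you correctly invoke Theorem~\ref{DirectSumResult} together with the vanishing $C_6^{jf}=0$ for $j>7$ to reduce to seven initial conditions $H_6(C_\bullet^{jf})$, $1\leq j\leq 7$, correctly compute $(1-y^2)(1+y^2)(1+y^2+y^4)=1+y^2-y^6-y^8$ and its specialisation $-315$ at $y=2$, and correctly identify the two genuine obstacles (computational scale and showing the new elementary divisor is not a specialisation artifact, particularly since $\Z[y^2]$ is not a PID). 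Your suggested remedy, to use the K\"unneth subcomplex of \S\ref{Kunneth} or the filtration of \S\ref{specdecomp} to isolate the block of $\partial_7$ responsible for the new factor, is precisely the direction the authors flag as ``possible tools for finding further filtrations,'' and is the natural next step; the main caveat is that those splittings are themselves only conjectural (Conjecture~\ref{conjsplitcmfl}), so they cannot yet be used rigorously and would need to be established first.
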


We show that for $d_6(m)$ in part (1), $d_6(m) \geq m(m-1)$. See Table \ref{conjsplitdata}.

We can try guessing more; motivated by symmetry, but without much evidence we would suggest the following conjectures:
\begin{enumerate}
\item[(1)] $H_n(R_{(m)})$ can be decomposed as a sum of cyclic $k$-modules.
\item[(2)] Each factor is of type $k/(1-y^2)[n]_{y^2}!$ where 
$[n]_{y^2}=1+y^2+y^4+...+y^{2n-2}$ and $[n]_{y^2}!=[1]_{y^2}[2]_{y^2}[n]_{y^2}.$
\item[(3)] With much less confidence we would guess that 
the lowest index of homology having torsion $k/(1-y^2)[n]_{y^2}!$ is $H_{n!}(R_{(m)})$. 
Despite the fact that (3) has not much experimental support we think it is worth of exploring.
\end{enumerate}

\subsection{Exploring splittings of $C_{\bullet}^{mf}$}

In addition to the K\"unneth subchain complex of $C_\bullet^m$, we have sought to further decompose the chain complexes $C_\bullet^{mf}$. To this end, we define the $C_\bullet^{mf,\ell}$ to be the subchain complex of $C_\bullet^{mf}$ whose chain modules have basis composed of tuples using the top letter $v_m$ of the alphabet at most $\ell$ times (and using $v_2, \dots, v_m$ at least once). We then have the short exact sequences of chain complexes \[ \begin{tikzcd}
	0 \arrow[r] & C_\bullet^{mf,\ell} \arrow[r] & C_\bullet^{mf} \arrow[r] & C_\bullet^{mf} / C_\bullet^{mf, \ell} \arrow[r] & 0,
\end{tikzcd} \] and the  computational data in Table \ref{conjsplitdata} lead us to  conjecture the above sequence splits and, therefore, its related homology. 

\begin{conjecture}\label{conjsplitcmfl} Over the ring $k = \mathbb{Z}[y^{2}],$

    \[ H_n(C_{\bullet}^{mf}) = H_n(C_{\bullet}^{mf,\ell}) \oplus H_n(C_{\bullet}^{mf}/C_{\bullet}^{mf,\ell}). \]
\end{conjecture}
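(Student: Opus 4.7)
The plan is to prove the splitting at the chain-complex level by constructing a chain-map section to the quotient map $q: C_{\bullet}^{mf} \to C_{\bullet}^{mf}/C_{\bullet}^{mf,\ell}$; this is strictly stronger than the stated conjecture, which concerns only homology, but a weaker chain-homotopy section or a direct argument on long exact sequences would also suffice. Unlike the splittings in Section~\ref{sec:filtrations-splittings}, which came from weakly order-preserving maps via Lemma~\ref{lem:weak-order} and Proposition~\ref{prop:morph-chain}, no order-preserving map of $X_{(m)}$ controls the count of the top letter $v_m$, so a more delicate construction is required.

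The central observation is that $\partial$ respects the filtration by $v_m$-count: because each face map consists of applications of $R_{(m)}$, which preserves the multiset of letters in each summand, followed by a single wall deletion, applying a face map to a tuple with $k$ copies of $v_m$ yields a linear combination of $(n-1)$-tuples with either $k$ or $k-1$ copies of $v_m$. Writing $\partial = \partial^{0} + \partial^{-1}$ where $\partial^{0}$ preserves the count and $\partial^{-1}$ decreases it by one, the identity $\partial^{2}=0$ yields $(\partial^{0})^{2}=0$, $\partial^{0}\partial^{-1}+\partial^{-1}\partial^{0}=0$, and $(\partial^{-1})^{2}=0$. In particular the associated graded $(\mathrm{gr}\, C_\bullet^{mf},\partial^{0})$ is a chain complex, and the filtration produces a spectral sequence whose $E_1$ page is its homology with differential induced by $\partial^{-1}$.

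The first substantive step is to show that this spectral sequence degenerates at $E_{1}$, which is equivalent to $\partial^{-1}$ acting as zero on $\partial^{0}$-cycles modulo $\partial^{0}$-boundaries. I would attack this by exploiting the fact that when two consecutive equal tensor factors occur, $R_{(m)}$ acts as the identity; this sharply limits which terms contribute a factor of $v_m$ at the wall and should make $\partial^{-1}$ expressible as $\partial^{0} h + h \partial^{0}$ for an explicit chain homotopy $h$ that increases the $v_m$-count by one. Such an $h$ could plausibly be defined by inserting an extra copy of $v_m$ at a canonical position---say, just after the leftmost existing $v_m$---with signs and Yang-Baxter coefficients chosen so that the boundary identity holds.

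The main obstacle is that since $k=\mathbb{Z}[y^2]$ is not a PID, even proving $E_1$-degeneration does not by itself yield the direct-sum decomposition, because there could be nontrivial $\mathrm{Ext}^{1}_k$ extensions between the torsion summands of $H_n(C_\bullet^{mf,\ell})$ and $H_n(C_\bullet^{mf}/C_\bullet^{mf,\ell})$. The pattern of torsion orders $(1-y^2)$ and $(1-y^4)$ observed in Table~\ref{initialconditiondata} suggests that such extensions vanish, but a proof will likely require producing the chain-level section directly---which bypasses the extension question entirely---or a uniform Smith-normal-form analysis of $\partial_n$ over $\mathbb{Z}[y^2]$. I expect the chain-level construction, seeded by the homotopy $h$ described above and iterated so as to cancel the accumulated correction terms at each level of the filtration, to be the cleanest route; but verifying that the resulting maps remain integral over $\mathbb{Z}[y^2]$ rather than merely over $\mathbb{Q}[y^2]$ is the likely sticking point.
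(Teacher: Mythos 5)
The statement you are attempting to prove appears in the paper as Conjecture~\ref{conjsplitcmfl}; the authors offer only computational evidence (Table~\ref{conjsplitdata}, computed at the specialization $y=2$ and, in a couple of cases, $y=3$) and give no proof, so there is no argument in the paper to compare against.

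Your proposal is a sensible plan of attack, and the foundational observation is correct: every application of $R_{(m)}$ preserves the multiset of letters appearing in each summand, so the only way a face map can decrease the $v_m$-count is via the final wall deletion, and then by at most one. Hence $\partial$ respects the filtration by $v_m$-count and does split as $\partial^0 + \partial^{-1}$ in the way you describe, with $C_\bullet^{mf,\ell}$ the $\ell$-th filtration subcomplex. However, the proposal contains two genuine gaps, both of which you yourself flag. First, you never actually construct the chain homotopy $h$ that would witness $\partial^{-1} = \partial^0 h + h\partial^0$; you only suggest it ``could plausibly be defined'' by inserting an extra $v_m$ after the leftmost existing copy. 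Without the coefficients written down, the identity cannot be checked, and given that the Boltzmann weights involve $1$, $y^2$, and $1-y^2$ simultaneously, the sign and coefficient bookkeeping is genuinely delicate---it is not at all obvious that a formula of this shape closes up, let alone integrally over $\mathbb{Z}[y^2]$. Second, as you correctly note, even if $E_1$-degeneration held (so the long exact sequence of homology broke into short exact sequences), the ring $k=\mathbb{Z}[y^2]$ is not a PID, so those short exact sequences of homology groups need not split; one would have to show the relevant classes in $\mathrm{Ext}^1_k$ between the observed torsion modules $k/(1-y^2)$ and $k/(1-y^4)$ vanish, and nothing in the proposal does so. Building a chain-level section directly would indeed bypass both problems, but constructing that section is precisely the hard part, and the proposal leaves it open. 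The authors' choice to present this statement as a conjecture despite having extensive numerical evidence is a strong signal that this is not an easy gap to close.
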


\vspace{1em}

\begin{table}[H]
\begin{center}
\begin{tabular}{ c | c c c }
    $n,m,\ell$ & $H_n(C_\bullet^{mf,\ell})$ & $H_n(C_\bullet^{mf})$ & $H_n(C_\bullet^{mf} / C_\bullet^{mf,\ell})$ \\
    \hline 
    $4,2,1$ & $(1,2,0)$ & $(1,6,4)$ & $(0,4,4)$ \\
    $4,2,2$ & $(1,4,2)$ & $(1,6,4)$ & $(0,2,2)$ \\
    $4,3,1$ & $(1,18,2)$ & $(1,33,6)$ & $(0,15,4)$ \\
    $4,3,2$ & $(1,30,5)$ & $(1,33,6)$ & $(0,3,1)$ \\
    $5,2,1$ & $(1,2,0)$ & $(1,13,7)$ & $(0,11,7)$ \\
    $5,2,2$ & $(1,6,2)$ & $(1,13,7)$ & $(0,7,5)$ \\
    $5,2,3$ & $(1,11,4)$ & $(1,13,7)$ & $(0,2,3)$ \\
    $5,3,1$ & $(1,50,4)$ & $(1,124,16)$ & $(0,74,12)$ \\
    $5,3,2$ & $(1,97,12)$ & $(1,124,16)$ & $(0,27,4)$ \\
    $5,3,3$ & $(1,120,15)$ & $(1,124,16)$ & $(0,4,1)$ \\
    $5,3,1$ & $(1,50,4)$ & $(1,124,16)$ &  $(0,74,12)$ \\
    $5,3,2$ & $(1,97,4)$ & $(1,124,16)$ &  $(0,27,4)$ \\
    $5,4,1$ & $(1,201, 6)$ & $(1,323,12)$ & $(0,122,6)$  \\
    $5,4,2$ & $(1,304,11)$ & $(1,323,12)$ & $(0,19,1)$ \\
    $5,4,3$ & $(1,323,12)$ & $(1,323,12)$ & $(0,0,0)$ \\
    $6,2,1$ & $(1,3,0)$ & $(1,30,12)$ & $(0,27,12)$ \\
    $6,2,2$ & $(1,9,3)$ & $(1,30,12)$ & $(0,21,9)$ \\
    $6,2,3$ & $(1,20,5)$ & $(1,30,12)$ & $(0,10,7)$ \\
    $6,2,4$ & $(1,27,9)$ & $(1,30,12)$ & $(0,3,3)$ \\
    $6,3,1$ & $(1,124,7,0)$ & $(1,423,36,2)$ & $(0,299,29,2)$ \\
    $6,3,2$ & $(1,277,24,0)$ & $(1,423,36,2)$ & $(0,146,12,2)$ \\
\end{tabular}
\end{center}
\caption{These homology groups are calculated with $y=2$ substitution, and suggest splitness of $C_{\bullet}^{mf}$. In the case of $(6,3,1)$ and $(6,3,2)$, additional torsion is confirmed for $y=3$ substitution, as well.}
\end{table} 
\label{conjsplitdata}
\vspace{1em}

\color{black}

\subsection{More structure?}

We conclude with two further possibilities for studying Yang-Baxter homology. First, we can use the involution $\tau$ from Section~\ref{sec:DS} to introduce $\mathbb{Z}_2$-equivariant homology of $R_{(m)}$ (see e.g. \cite{Tu,BPS}). Second, when constructing the homology of Yang-Baxter operators $R_{(m)}$ we naturally start from precubic modules. On the other hand precubic sets have natural geometric realizations (elements of $X_n$ are the names of $n$-cubes and $d_{i,n}^{\varepsilon}$ are gluing instructions). In fact, the homology theory for racks and quandles  began with this geometric realization, see Roger Fenn's paper \cite{Fenn}. The geometric realization of the
homology of $R_{(m)}$ seems to be more difficult, and reminds us of the discovery of the geometric realization of Khovanov homology, \cite{LiSa}. Furthermore, we plan to analyze degenerate maps $s_{i,n}: C_n(R_{(m)}) \to C_{n+1}(R_{(m)}) $ given by $s_{i,n}(a_1,...,a_n)=(a_1,...,a_{i-1},a_i,a_i,a_{i+1},...,a_n)$, and look for an analogue of the (very) weak simplicial modules introduced in \cite{Prz-5,Prz-8}.

\newpage

\section*{Acknowledgements}
We would like to thank Seung Yeop Yang for  valuable discussions. The fourth and seventh authors were supported by the LAMP Program of the National Research Foundation of Korea, grant No. RS-2023-00301914, and by the MSIT grant No. 2022R1A5A1033624. The fifth author was partially supported by the Simons Collaboration Grant 637794. The sixth author was partially supported by the National Natural Science Foundation of China (Grant No. 11901229, Grant No. 12371029, Grant No. 22341304).

\end{document}